\documentclass[12pt]{amsart}
\usepackage{enumerate}
\usepackage{bbm}
\usepackage{graphicx}
\usepackage{mathrsfs}
\usepackage{amsmath, amssymb, amsthm, latexsym}
\usepackage{amssymb,amscd,amsmath}
\usepackage{latexsym}
\usepackage{MnSymbol}
\usepackage{enumitem}
\usepackage[center]{caption}
\usepackage{tikz}
\usepackage[all]{xy}
\usepackage{amsxtra,mathtools,stmaryrd}
\newcounter{braid}
\newcounter{strands}

\DeclareMathAlphabet{\bsf}{OT1}{cmss}{bx}{n}

\pgfkeyssetvalue{/tikz/braid height}{1cm}
\pgfkeyssetvalue{/tikz/braid width}{1cm}
\pgfkeyssetvalue{/tikz/braid start}{(0,0)}
\pgfkeyssetvalue{/tikz/braid colour}{black}
\pgfkeys{/tikz/strands/.code={\setcounter{strands}{#1}}}

\makeatletter
\def\cross{%
  \@ifnextchar^{\message{Got sup}\cross@sup}{\cross@sub}}

\def\cross@sup^#1_#2{\render@cross{#2}{#1}}

\def\cross@sub_#1{\@ifnextchar^{\cross@@sub{#1}}{\render@cross{#1}{1}}}

\def\cross@@sub#1^#2{\render@cross{#1}{#2}}

\def\render@cross#1#2{
  \def\strand{#1}
  \def\crossing{#2}
  \pgfmathsetmacro{\cross@y}{-\value{braid}*\braid@h}
  \pgfmathtruncatemacro{\nextstrand}{#1+1}
  \foreach \thread in {1,...,\value{strands}}
  {
    \pgfmathsetmacro{\strand@x}{\thread * \braid@w}
    \ifnum\thread=\strand
    \pgfmathsetmacro{\over@x}{\strand * \braid@w + .5*(1 - \crossing) * \braid@w}
    \pgfmathsetmacro{\under@x}{\strand * \braid@w + .5*(1 + \crossing) * \braid@w}
    \draw[braid] \pgfkeysvalueof{/tikz/braid start} +(\under@x pt,\cross@y pt) to[out=-90,in=90] +(\over@x pt,\cross@y pt -\braid@h);
    \draw[braid] \pgfkeysvalueof{/tikz/braid start} +(\over@x pt,\cross@y pt) to[out=-90,in=90] +(\under@x pt,\cross@y pt -\braid@h);
    \else
    \ifnum\thread=\nextstrand
    \else
     \draw[braid] \pgfkeysvalueof{/tikz/braid start} ++(\strand@x pt,\cross@y pt) -- ++(0,-\braid@h);
    \fi
   \fi
  }
  \stepcounter{braid}
}

\tikzset{braid/.style={double=\pgfkeysvalueof{/tikz/braid colour},double distance=1pt,line width=2pt,white}}

\newcommand{\Tau}{\mathrm{T}}

\newcommand{\braid}[2][]{%
  \begingroup
  \pgfkeys{/tikz/strands=2}
  \tikzset{#1}
  \pgfkeysgetvalue{/tikz/braid width}{\braid@w}
  \pgfkeysgetvalue{/tikz/braid height}{\braid@h}
  \setcounter{braid}{0}
  \let\sigma=\cross
  #2
  \endgroup
}
\makeatother

\input xypic
\newtheorem{theorem}{Theorem}
\newtheorem{proposition}[theorem]{Proposition}

\makeatletter
\makeatother

\def\Z{\mathbb{Z}}

\def\C{\mathbb{C}}

\def\R{\mathbb{R}}
\def\C{\mathbb{C}}

\def\N{\mathbb{N}}

\def\F{\mathbb{F}}

\def\qed{\hfill$\square$\medskip}

\def\Zpk{\mathbb{Z}/p^{k}}
\def\Zpk1{\mathbb{Z}/p^{k-1}}

\newcommand{\rref}[1]{(\ref{#1})}

\newcommand{\beg}[2]{\begin{equation}\label{#1}#2\end{equation}}
\def\r{\rightarrow}

\def\F{\mathbb{F}}

\def\sl2{\widetilde{SL_{2}(\Z)}}

\author{Po Hu, Igor Kriz, Petr Somberg, and Foling Zou}
\title[Equivariant Steenrod algebra]{The $\Z/p$-equivariant dual Steenrod algebra for an odd prime $p$}
\thanks{Kriz acknowledges the support of a Simons Collaboration Grant. Somberg acknowledges support
from the grant GACR 19-28628X}

\begin{document}

\maketitle

\begin{abstract}
We completely calculate the $RO(\Z/p)$-graded coefficients $H\underline{\Z/p}_\star H\underline{\Z/p}$
for the constant Mackey functor $\underline{\Z/p}$.
\end{abstract}

\section{Introduction}\label{intro}

In \cite{hk}, Hu and Kriz computed the Hopf algebroid 
\beg{ez2steen}{(H\underline{\Z/2}_\star,
H\underline{\Z/2}_\star H\underline{\Z/2})}
where $\underline{\Z/2}$ denotes the constant $\Z/2$-Mackey functor and the subscript $?_\star$ denotes
$RO(G)$-graded coefficients. (We direct the reader
to \cite{dress,lewis,li,sk} for general preliminaries on Mackey functors, and to \cite{lms,lms1} for their
relationship with ordinary equivariant generalized (co)homology theories.) There are several remarkable
features of the Hopf algebroid \rref{ez2steen}, which played an important role
in the calculation of coefficients of Real-oriented spectra in \cite{hk}. 
For example, the morphism ring of \rref{ez2steen} is a free module
over the object ring (in particular, the Hopf algebroid is ``flat," see \cite{ravenel}). Another remarkable
fact is that the Hopf algebroid \rref{ez2steen} is closely related to the motivic
dual Steenrod algebra Hopf algebroid \cite{voev,hko}. In particular, if we denote by $\alpha$ the real
sign representation, there are generator classes $\xi_i$ in degrees $(2^i-1)(1+\alpha)$ and 
$\tau_i$ in degrees $(2^i-1)(1+\alpha)+1$. A key point in proving this is the existence of the 
infinite complex projective
space with $\Z/2$-action by complex conjugation.

\vspace{3mm}
When trying to generalize this calculation to a prime $p>2$, i.e. to compute
\beg{ezpsteen}{(H\underline{\Z/p}_\star,
H\underline{\Z/p}_\star H\underline{\Z/p}) \;\text{for $p>2$ prime},}
the first difficulty one encounters is that no $p>2$ generalization of complex conjugation on $\C P^\infty$
presents itself. Additionally, applications of this calculation were not yet known. For those reasons, 
likely, the question
remained open for more than 20 years.

However, recently $\Z/p$-equivariant cohomological operations at $p>2$ have become of interest, for example due to 
questions of $p>2$ analogues of certain computations by Hill, Hopkins, and Ravenel in their solution
to the Kervaire invariant $1$ problem \cite{hhr,  hhrodd}. In fact, Sankar and Wilson
\cite{sw} made progress on the calculation of  \rref{ezpsteen}, in particular providing
a complete decomposition of the spectrum $
H\underline{\Z/p}\wedge H\underline{\Z/p}$ and thereby proving that the
morphism ring \rref{ezpsteen} is {\em not} a flat module over the object ring.

\vspace{3mm}
The goal of the present paper is to complete the calculation of the Hopf algebroid \rref{ezpsteen}. 
The authors note that while the present paper is largely self-contained, comparing with the results of \cite{sw} 
proved to be a step in the present calculation. We also point out the fact that while we do obtain
complete characterizations of all the structure formulas of the Hopf algebroid \rref{ezpsteen}, 
not all of them are presented in a nice explicit fashion, but only by recursion
through (explicit) maps into other rings. This, in fact, also occurs in \rref{ez2steen}, where
while the product relations and the coproduct are very elegant, the right unit is only recursively characterized
by comparison with Borel cohomology. In the case of \rref{ezpsteen}, this occurs for more of the structure
formulas. 

\vspace{2mm}
The formulas are, in fact, too complicated to present in the introduction, but we outline here
the basic steps and results, with references to precise
statements in the text. We first need to establish
certain general preliminary facts about $\Z/p$-equivariant homology which are given in Section \ref{prelim}
below. The first important realization is that $\Z/p$-equivariant cohomology with constant coefficients
is periodic under differences of irreducible real $\Z/p$-representations of complex type. Therefore, 
since we are only using the additive structure of the real representation ring, we can reduce the indexing
from $RO(\Z/p)$ to the free abelian group $R$ on $1$ (the real $1$-representation) and $\beta$, which is
a chosen irreducible real representation of complex type. 

Considering the full $R$-grading, on the other
hand, is essential, since the pattern is simpler than if we only considered the $\Z$-grading (not to mention 
the fact that it contains more information). As already remarked, even in the $p=2$ case,
not all the free generators are in integral dimensions. 
As noted in \cite{sw}, the dual $\Z/p$-equivariant Steenrod algebra for $p>2$ is not a free module
over the coefficient, but still, the $R$-graded pattern is a lot easier to describe than the $\Z$-graded pattern.

Additively speaking, for $p>2$, the $R$-graded dual $\Z/p$-equivariant Steenrod algebra 
$A_\star=H\underline{\Z/p}_\star H\underline{\Z/p}$
is a sum of copies of the coefficients of 
$H\underline{\Z/p}$, and the coefficients of another $H\underline{\Z/p}$-module $HM$, which, up
to shift by $\beta-2$, 
is the equivariant cohomology corresponding to the Mackey functor $Q$ which is $0$ on the fixed orbit
and $\Z/p$ (fixed) on the free orbit. In Propositions \ref{p1}, \ref{p2}, we describe the 
ring $H\underline{\Z/p}_\star$ and the module $HM_\star$. 

It is quite interesting that for $p=2$, this $H\underline{\Z/p}$-module $HM$ is,
in fact, an $R$-shift of $H\underline{\Z/2}$. For $p>2$, however, this fails, and in fact infinitely many
higher $\underline{\Z/p}$-$Tor$-groups of $Q$ with itself are non-trivial.

How is it possible for $A_\star$ to be manageable, then? As noted in \cite{sw}, 
the spectrum $H\underline{\Z/p}\wedge H\underline{\Z/p}$ is, in fact not a wedge-sum of $R$-graded
suspensions of $H\underline{\Z/p}$ and $HM$. It turns
out that instead, the $HM$-summands of the coefficients form ``duplexes," i.e. pairs each of which makes up an $H\underline{\Z/p}$-module which we denote by $HT$,
whose smashes over $H\underline{\Z/p}$ are again $R$-shifted copies of $HT$
(Proposition \ref{p44}). We establish this,
and further explain the phenomenon, in Section \ref{prelim} below.

\vspace{2mm}

Now our description of the multiplicative properties of $A_\star$ for $p>2$ must, of course, take into
account the smashing rules of the building blocks $HT$
over $H\underline{\Z/p}$. However, we still need to identify elements which
play the role of ``multiplicative generators." In the present situation, this is facilitated in Section 
\ref{slens} by computing the $H\underline{\Z/p}$-cohomology of the equivariant complex
projective space $\C P_{\Z/p}^\infty$ (Proposition \ref{p33}) and the equivariant infinite lens space 
$B_{\Z/p}(\Z/p)$ (Proposition \ref{pzpzp}).

This can be accomplished using the spectral sequence coming from a filtration of the complete complex
universe by a regular flag, which was also used in \cite{kluf} to give a new more explicit
proof of Hausmann's theorem \cite{haus} on the universality of the equivariant formal group on 
stable complex cobordism. 

The spectral sequences can be completely
solved and the multiplicative structure is completely determined by
comparison with Borel cohomology. One can then use an analog of Milnor's method \cite{milnor}
to construct elements of $A_\star$. It is convenient to do both the cases of the projective space and the lens space,
since in the projective case, the spectral sequence actually collapses, 
thus yielding elements of $\underline{\xi}_n\in A_\star$ of dimension 
$$2p^{n-1}+(p^n-p^{n-1}-1)\beta$$
and $\underline{\theta}_n$ of dimension 
$$2(p^n-1)+(p-1)(p^n-1)\beta.$$
One also obtains analogues of Milnor coproduct relations between these elements by the method of 
\cite{milnor}. One notes that the elements $\underline{\theta}_n$ are of the right ``slope" $2k+\ell\beta$
where $(k:\ell)=(1:p-1)$ and 
in fact, they turn out to generate $H\underline{\Z/p}_\star$-summands of $A_\star$. It is also interesting to note
that for $p>2$, it is impossible to shift the element $\underline{\xi}_n$ to the ``right slope," since the 
values of $2k$ and $\ell$ would not be integers.

To completely understand the role of the elements $\underline{\xi}_n$, and also to construct the
remaining ``generators" of $A_\star$, one solves the flag spectral sequence for $B_{\Z/p}(\Z/p)$. This time,
there are some differentials (although not many), which is the heuristic reason why $A_\star$
is not a free $H\underline{\Z/p}_\star$-module. In fact, following the method of \cite{milnor},
one constructs $\underline{\tau}_n$, $\widehat{\tau}_n$, of degree $|\underline{\xi}_n|+1$, 
$|\underline{\xi}_n|$, respectively, and also an element $\widehat{\xi}_n$ of degree
$|\underline{\xi}_n|-1$. The ``multiplet" of elements  
\beg{emultiplet100}{\underline{\tau}_n, \widehat{\tau}_n,
 \underline{\xi}_n, \widehat{\xi}_n,} 
in fact, ``generates" a copy of $HT_\star$. Again, 
Milnor-style coproduct relations on these elements follow using the same method.

The cohomology of the equivariant lens space, in fact, 
gives rise to one additional set of elements $\underline{\mu}_n$ of degree
$|\underline{\theta}_n|+1$, each of which also generates an $H\underline{\Z/p}_\star$-summand
of $A_\star$. Coproduct relations on this element also follow from the method of \cite{milnor}, 
but this time, there are more terms, so they are hard to write down in an elegant form. 

To construct a complete decomposition of $A_\star$ into a sum of copies of $H\underline{\Z/p}_\star$,
$HT_\star$, one notes that along with the ``multiplet" \rref{emultiplet100}, we have also have a ``multiplet"
\beg{emultiplet101}{
\underline{\tau}_n\underline{\xi}_n^{i-1}, \widehat{\tau}_n\underline{\xi}_n^{i-1},
 \underline{\xi}_n^{i}, \widehat{\xi}_n\underline{\xi}_n^{i-1}
}
for $i=1,\dots,p-1$, which ``generates" another copy of $HT_\star$, in a degree predicted
by Proposition \ref{p44}. (This, in fact, means that some of the linear combination 
of those elements will be
divisible by the class $b$ of degree $-\beta$ which comes from the 
inclusion $S^0\r S^\beta$. We will return to this point below.) For now, however,
let us remark that for $i=p$, the element $\underline{\xi}_n^p$ is dependent on $\underline{\theta}_n$,
which is precisely accounted for by the presence of the additional element $\underline{\mu}_n$.

\vspace{2mm}
Tensoring these structures for different $n$ then completely accounts for all elements of $A_\star$, 
coinciding with the result of \cite{sw}. To determine the structure formulas of $A_\star$ completely, we need to 
determine the divisibility of the linear combinations of the elements \rref{emultiplet101} 
by $b$, and to completely describe all the multiplicative relations. This can, in fact, be done by
comparing with the Borel cohomology version $A^{cc}_\star$ of the $\Z/p$-equivariant Steenrod algebra,
similarly as in \cite{hk}. The divisibility is described in Proposition \ref{ppdiv}. The final answer is recorded
in Theorem \ref{tfinal}. We write down explicitly those formulas which are simple enough to state explicitly.
The remaining formulas are determined recursively by the specified map into $A^{cc}_\star$.

\vspace{5mm}
\noindent
{\bf Acknowledgement:} The authors are very indebted to Peter May for comments. The authors are also
thankful to Hood Chatham for
creating the spectralsequences latex package and to Eva Belmont for tutorials on it.

\section{Preliminaries}\label{prelim}

Let $p$ be an odd prime. In this paper, we will work $p$-locally. In other words, when we write $\Z$,
we mean $\Z_{(p)}$. For our present purposes, the ring structure on $RO(\Z/p)$
does not matter. Thus, it can be treated as the 
free abelian group on the irreducible real representations, which are $1$ (the trivial irreducible real 
representation) and $\beta^i$, $i=1,\dots, (p-1)/2$ where $\beta$ is the $1$-dimensional complex
representation where the generator $\gamma$ acts by $\zeta_p$ (since $\beta^i$ is the dual of $\beta^{p-i}$).

However, $p$-local $\Z/p$-equivariant generalized cohomology is periodic with period $\beta^i-\beta^j$, $1\leq i,j<p$. 
(This was also observed by Sankar-Wilson \cite{sw}.)
This follows from the fact that $S(\beta^i)$ is the homotopy cofiber of
$$1-\gamma^i:\Z/p_+\r \Z/p_+.$$
Now we have a homotopy commutative diagram of $\Z/p$-equivariant spectra
$$\diagram
\Z/p_+\rto^{1-\gamma^i}&\Z/p_+\\
\Z/p_+\uto^{Id}\rto^{1-\gamma}&\Z/p_+.\uto_{1+\gamma+\dots+\gamma^{i-1}}
\enddiagram$$
On the other hand, $p$-locally $N_i=1+\gamma+\dots+\gamma^{i-1}$ has a homotopy inverse 
equal to a unit times
$$N_{j}^i=1+\gamma^i+\gamma^{2i}+\dots+\gamma^{(j-1)i}$$
where $j$ is a positive integer such that $ij=kp^2+1$. Indeed, one has
\beg{eunitt}{N_{j}^i\circ N_i=1+\gamma+\dots+\gamma^{p^2k}=pkN_p+1.}
Since we are working $p$-locally, the right hand side induces an isomorphism both in $\Z/p$-equivariant 
and non-equivariant homotopy groups, and thus is a unit in the $\Z/p$-equivariant stable
homotopy category.

Therefore, the reduced cell chain complexes of $S^{\beta^i}$, which are the unreduced suspensions of
$S(\beta^i)$, are also isomorphic, which implies the periodicity by the results of \cite{sk, lewis} (see also \cite{hk1,klu}).
Thus, when discussing ordinary $\Z/p$-homology, the grading can be by elements of $R=\Z\{1,\beta\}$,
without losing information.

\vspace{3mm}
Now the Borel cohomology $H\underline{\Z/p}_\star^c=F(E\Z/p_+,H\underline{\Z/p})_\star$ is complex-oriented.
Its $R$-graded coefficients (given by group cohomology) are given by
$$H\underline{\Z/p}^c_\star=\Z/p[\sigma^{2},\sigma^{-2}][b]\otimes \Lambda[u],$$
where the (homological) dimension of the generators is given by
$$|b|=-\beta,\;|\sigma^{2}|=\beta-2, |u|=-1.$$
The Tate cohomology 
$H_\star^t(\underline{\Z/p})=(\widetilde{E\Z/p}\wedge F(E\Z/p_+,H\underline{\Z/p}))_\star$
(where $\widetilde{X}$ denotes the unreduced suspension of $X$) is given by 
$$H\underline{\Z/p}^t_\star=\Z/p[\sigma^{2},\sigma^{-2}][b, b^{-1}]
\otimes \Lambda[u].$$
(The notation $\sigma^2$ is to connect with the notation of \cite{hk}, where the present question
was treated for $p=2$.) The Borel homology $H^b_\star H\underline{\Z/p} = (E\Z/p_+\wedge
H\underline{\Z/p})_\star$ then is given by
$$ H\underline{\Z/p}^b_\star=\Sigma^{-1}H^t_\star H\underline{\Z/p}/H^c_\star H\underline{\Z/p}.$$
Now similarly as for $p=2$, the $\Z$-graded coefficients imply
$$H\underline{\Z/p}_\star^\phi=\Phi^{\Z/p}H\underline{\Z/p}_\star=(\widetilde{E\Z/p}\wedge 
H\underline{\Z/p})_\star=\Z/p[\sigma^{-2}][b,b^{-1}]\otimes \Lambda[\sigma^{-2}u],$$
the map into $H\underline{\Z/p}^t_\star$ is given by inclusion. 

Composing the inclusion with the quotient map into $\Sigma  H\underline{\Z/p}^b_\star$ then gives
the connecting map of the long exact sequence associated with the fibration
$$ H\underline{\Z/p}^b\r  H\underline{\Z/p}\r  H\underline{\Z/p}^\phi$$
where $E^\phi=E\wedge \widetilde{E\Z/p}$ for a $\Z/p$-equivariant spectrum $E$. 

Regarding the commutation rule of a (not necessarily coherently) $\Z/p$-equivariant ring spectrum,
we note that the commutation rule between a class of degree $1$ and a class of degree $\beta$ is
a matter of convention. The commutation rule between two classes of degree $\beta$ is given
by $x\mapsto -x$ on $S^\beta$, which is equivariantly homotopic to the identity. Thus, if the dimensions
of classes $x,y$ are $k+\ell\beta$, $m+n\beta$, then we can write
$$xy=(-1)^{km}yx.$$

This implies the following

\begin{proposition}\label{p1}
Consider the ring 
$$\Gamma=\Z/p[\sigma^{-2}][b]\otimes\Lambda[\sigma^{-2}u]$$
graded as above. Let $\Gamma^\prime$ be the second local cohomology of $\Gamma$ with respect
to the ideal $(\sigma^{-2},b)$, desuspended by $1$. Then
$$H\underline{\Z/p}_\star=\Gamma\oplus \Gamma^\prime$$
with the abelian ring structure over $\Gamma$.

Addtively, therefore, $H\underline{\Z/p}_{k+\ell\beta}$ is $\Z/p$ when 
$0\leq k\leq -2\ell$ or $-2\ell\leq k\leq -2$, and $0$ otherwise.
\end{proposition}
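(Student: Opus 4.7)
My plan is to use the isotropy separation cofibration
$$H\underline{\Z/p}^b \to H\underline{\Z/p} \to H\underline{\Z/p}^\phi$$
together with the explicit description of its connecting map $\partial$ given above as the composite $H^\phi \hookrightarrow H^t \twoheadrightarrow H^t/H^c = \Sigma H^b$. The associated long exact sequence on $R$-graded coefficients collapses to a short exact sequence
$$0 \to \mathrm{coker}(\partial) \to H\underline{\Z/p}_\star \to \ker(\partial) \to 0,$$
and the proof reduces to identifying this kernel and cokernel, splitting the extension, and reading off the dimension count.

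The kernel is immediate: an element of $H^\phi_\star = \Z/p[\sigma^{-2}][b,b^{-1}]\otimes \Lambda[\sigma^{-2}u]$ is killed by $\partial$ iff, inside $H^t_\star$, it already lies in $H^c_\star = \Z/p[\sigma^{\pm 2}][b]\otimes \Lambda[u]$, i.e.\ iff only nonnegative powers of $b$ occur, which selects precisely $\Z/p[\sigma^{-2}][b]\otimes \Lambda[\sigma^{-2}u] = \Gamma$. For the cokernel, the key algebraic observation is that the three auxiliary rings of the Tate square are localizations of $\Gamma$:
$$H^c_\star = \Gamma[\sigma^{2}], \qquad H^\phi_\star = \Gamma[b^{-1}], \qquad H^t_\star = \Gamma[\sigma^{2}, b^{-1}],$$
where inverting $\sigma^{-2}$ over $\Gamma$ promotes the exterior class $\sigma^{-2}u$ to $u$ and turns $(\sigma^{-2}u)^{2}=0$ into $u^{2}=0$. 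Consequently $H^t/(H^c+H^\phi)$ is exactly the cokernel of the \v{C}ech differential $\Gamma[\sigma^{2}]\oplus\Gamma[b^{-1}]\to \Gamma[\sigma^{2}, b^{-1}]$, i.e.\ the second local cohomology $H^2_{(\sigma^{-2},b)}(\Gamma)$. The $\Sigma^{-1}$ already built into $H^b = \Sigma^{-1}(H^t/H^c)$ absorbs the ``desuspended by $1$'' in the definition of $\Gamma'$, yielding $\mathrm{coker}(\partial)\cong \Gamma'$.

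For the splitting, I would lift the generators of $\Gamma$ to genuine classes in $H\underline{\Z/p}_\star$. The class $b$ is realized by the Euler class $S^0\to S^{\beta}$, while $\sigma^{-2}$ and $\sigma^{-2}u$ lift because the surjection $H\underline{\Z/p}_\star\twoheadrightarrow \Gamma$ factors through $H^\phi_\star$ with image exactly $\Gamma$. The defining relations of $\Gamma$ persist in $H\underline{\Z/p}_\star$: for example $(\sigma^{-2}u)^2=0$ holds because the bidegree $(2,-2)$ lies in the $\Gamma$-region where the map $H\underline{\Z/p}_\star\to H^\phi_\star$ is already injective. This provides a ring-theoretic section $\Gamma \hookrightarrow H\underline{\Z/p}_\star$, with $\Gamma'$ as a complementary ideal. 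The square-zero statement $\Gamma'\cdot\Gamma'=0$ that is the substance of the word ``abelian'' in ``abelian ring over $\Gamma$'' follows from the observation that $\Gamma' = \mathrm{image}(H^b\to H\underline{\Z/p})$ lies in the kernel of the map $H\underline{\Z/p}\to H^c$ (since the composite $H^b\to H\underline{\Z/p}\to H^c$ is null on coefficients, as $H^c\hookrightarrow H^t$ is injective and hence $H^b\to H^c$ vanishes via the Tate square), so that products of $\Gamma'$-classes are detected by neither $H^c_\star$ nor $H^\phi_\star$.

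Finally, the dimension count is direct bookkeeping. The basis $\{\sigma^{-2k}b^{\ell'}(\sigma^{-2}u)^{\epsilon}:k,\ell'\geq 0,\ \epsilon\in\{0,1\}\}$ of $\Gamma$ has bidegrees $(2k+\epsilon,\,-(k+\ell'+\epsilon))$, which as $k,\ell',\epsilon$ vary fills exactly the region $\{(k,\ell):0\leq k\leq -2\ell\}$ with multiplicity one; the explicit basis of $H^2_{(\sigma^{-2},b)}(\Gamma)$, consisting of the monomials $\sigma^{2m}b^{\ell'}$ with $m\geq 1,\ \ell'\leq -1$ and $\sigma^{2m}u b^{\ell'}$ with $m\geq 0,\ \ell'\leq -1$, fills the complementary region $-2\ell\leq k\leq -2$ after desuspension by $1$. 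The main obstacle in the argument is the square-zero assertion: dimensions alone do not force it (the $\Gamma'$-region is closed under bidegree addition, so products could a priori be nonzero), and the cleanest derivation needs the Tate pullback together with a careful match between the multiplicative structure on $H^b$ and the connecting-map identification $\Gamma' \cong H^2_{(\sigma^{-2},b)}(\Gamma)[-1]$.
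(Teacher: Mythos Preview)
Your approach via the isotropy separation sequence and the description of the connecting map as the composite $H^\phi_\star\hookrightarrow H^t_\star\twoheadrightarrow \Sigma H^b_\star$ is exactly what the paper intends: the paper's proof is literally the phrase ``This implies the following'' followed by a \qed, so you are supplying the details the authors leave to the reader. Your identification of $\ker\partial=\Gamma$ and $\mathrm{coker}\,\partial=\Gamma'$ via the \v{C}ech presentation of local cohomology is correct, and your dimension bookkeeping is accurate. The additive splitting is in fact automatic for degree reasons (the $\Gamma$-region has $\ell\le 0$ and the $\Gamma'$-region has $\ell\ge 1$, so they are disjoint), and your argument that the multiplicative section exists because all the relevant relation-degrees fall in the $\Gamma$-region is fine.

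The one genuine gap is the square-zero assertion $\Gamma'\cdot\Gamma'=0$, which you correctly flag. Your observation that $\Gamma'$ lies in the kernel of both $H_\star\to H^\phi_\star$ and $H_\star\to H^c_\star$ is true but insufficient, since \emph{all} of $\Gamma'$ already sits in that common kernel. The missing step is this: the $H\underline{\Z/p}_\star$-module structure on $H^b_\star$ factors through the ring map $j^*:H\underline{\Z/p}_\star\to H^c_\star$, because $E\Z/p_+\wedge H\underline{\Z/p}\simeq E\Z/p_+\wedge H^c$ as $H\underline{\Z/p}$-modules. Now for $y\in\Gamma'$ we have $j^*(y)=0$ (the composite $H^b\to H\to H^c$ is the norm map, which vanishes on coefficients since $H^c_\star\hookrightarrow H^t_\star$ is injective), so $y$ acts trivially on $H^b_\star$. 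Writing $x=i_*(a)$ with $a\in H^b_\star$ and using that $i_*$ is $H_\star$-linear gives $x\cdot y=i_*(a\cdot y)=i_*(0)=0$. That closes the argument.
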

\qed

We shall sometimes call the $\Gamma$ part of the coefficients as {\em the good tail}
and the $\Gamma^\prime$ part {\em the derived tail}.

\begin{figure}
\includegraphics{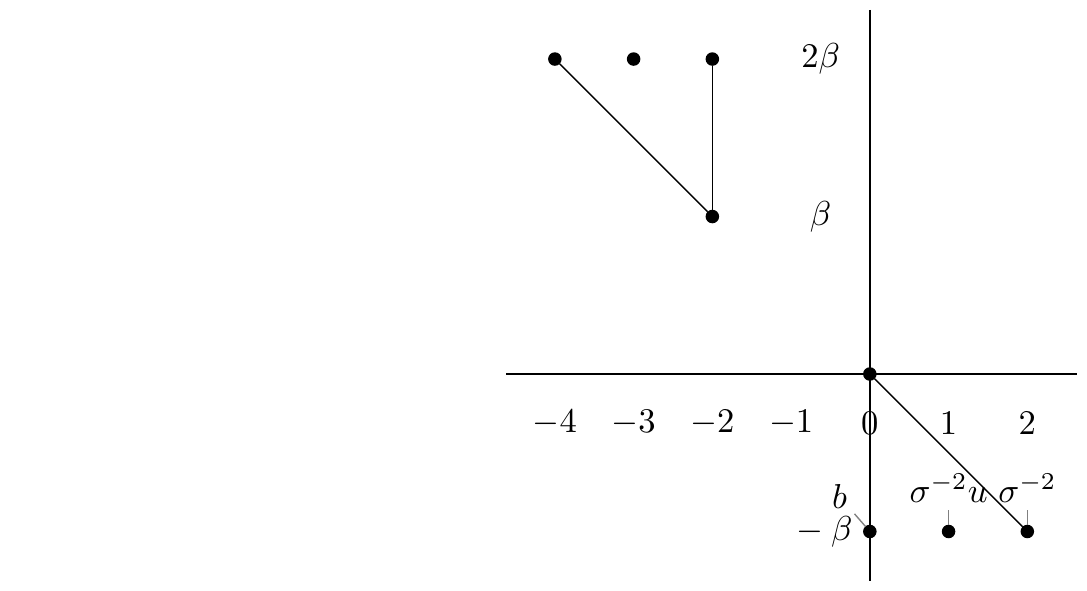}
\caption{The coefficients $H\underline{\Z/p}_\star$}
\end{figure}

\vspace{3mm}
Generally, the $R$-graded coefficients of ring spectra $X$ we will consider will split into a ``good tail"
$X_{\star}^g$ and a ``derived tail" $X_\star^d$. Generally, the coefficient ring will be abelian over the
good tail, and the derived tail will be isomorphic to the second local cohomology of $\Gamma$ with coefficients
in the good tail, desuspended by $1$. This is convenient, since it describes
the multiplication completely. From that point of view, then, we can write
$$\Gamma=H\underline{\Z/p}_\star^g,$$
$$\Gamma^\prime=H\underline{\Z/p}_\star^d=H^2_{(\sigma^{-2},b)}(\Gamma,\Gamma)[-1].$$

\vspace{3mm}

There is another $H\underline{\Z/p}$-module spectrum which will play a role in our calculations.
Consider the short exact sequence of $\Z/p$-Mackey functors
\beg{emq}{0\r Q\r \underline{\Z/p}\r \Phi\r 0}
where $Q$ resp. $\Phi$ are $\Z/p$ on the free resp. fixed orbit, and $0$ on the other orbit. This leads
to a cofibration sequence of (coherent) $H\underline{\Z/p}$-module spectra.
We have  $H\Phi^\phi=H\Phi$, which implies
$$H\Phi_\star=\Z/p[b,b^{-1}].$$
We also note that
\beg{ehqqf}{
HQ^\phi_n=\left\{\begin{array}{ll}\Z/p &\text{for $n\in\{1,2,\dots\}$}\\
0 &\text{else.}
\end{array}\right.
}
The following follows from the long exact sequence on $RO(\Z/p)$-graded coefficients associated with \rref{emq}.

\begin{proposition}\label{p2}
The group $HQ_{k+\ell\beta}$ is $\Z/p$ when $1\leq k\leq -2\ell$ or $-2\ell\leq k\leq -1$ and $0$ otherwise. 
From a $H\underline{\Z/p}_\star$-module point of view, the $\ell<0$ part of the
coefficients (the {\em good tail}) behaves as an ideal in $\Gamma$ and
the $\ell>0$ part {\em the derived tail} behaves as a quotient of $\Gamma^\prime$. Additionally, we have,
again,
$$H_\star^d=H^2_{(\beta,\sigma^{-2})}(H\underline{\Z/p}_\star^g,HQ_\star^g)[-1].$$
\end{proposition}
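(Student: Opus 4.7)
The approach is to run the long exact sequence in $RO(\Z/p)$-graded coefficients attached to the cofiber sequence of $H\underline{\Z/p}$-modules
\[
HQ \to H\underline{\Z/p} \to H\Phi
\]
realizing \rref{emq}. Before invoking the LES, I would record that $H\Phi_\star = \Z/p[b, b^{-1}]$ with $|b| = -\beta$ gives $H\Phi_{k+\ell\beta} = \Z/p$ precisely when $k = 0$, generated by $b^{-\ell}$. In particular the entire obstruction to $f_\ast : HQ \to H\underline{\Z/p}$ being an isomorphism is concentrated in the single column $k = 0$.

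The LES
\[
H\Phi_{k+1+\ell\beta} \xrightarrow{\delta} HQ_{k+\ell\beta} \xrightarrow{f_\ast} H\underline{\Z/p}_{k+\ell\beta} \xrightarrow{g_\ast} H\Phi_{k+\ell\beta} \xrightarrow{\delta} HQ_{k-1+\ell\beta}
\]
therefore gives $HQ_{k+\ell\beta} \cong H\underline{\Z/p}_{k+\ell\beta}$ whenever $k \notin \{-1, 0\}$, which we read off from Proposition \ref{p1}. For the exceptional columns, the key input is that $g_\ast$ is induced by the map of Mackey functors $\underline{\Z/p} \to \Phi$, which is an isomorphism on the fixed orbit; so whenever $H\underline{\Z/p}_{0+\ell\beta}$ is nonzero (equivalently $\ell \leq 0$), the map $g_\ast$ sends the generator $b^{-\ell}$ to $b^{-\ell}$ isomorphically. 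Hence $g_\ast$ is always injective, forcing $HQ_{0+\ell\beta} = 0$. For $k = -1$, the same observation makes the boundary $\delta$ vanish whenever $\ell \leq 0$, so $HQ_{-1+\ell\beta} = 0$ by Proposition \ref{p1}; while for $\ell > 0$ both $H\underline{\Z/p}_{0+\ell\beta}$ and $H\underline{\Z/p}_{-1+\ell\beta}$ vanish (the latter because $-1$ is not in $[-2\ell, -2]$), so $\delta$ becomes an isomorphism $H\Phi_{0+\ell\beta} \cong HQ_{-1+\ell\beta}$. Assembling the three cases reproduces exactly the dimension ranges claimed.

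The $H\underline{\Z/p}_\star$-module assertions are then inherited through $f_\ast$ and $\delta$: on the good tail, $f_\ast$ injectively identifies $HQ_\star^g$ with the ideal of $\Gamma$ consisting of elements of strictly positive integer degree (generated by $\sigma^{-2}$ and $\sigma^{-2}u$); on the derived tail, $f_\ast$ covers the sub-block $-2\ell \leq k \leq -2$ of $\Gamma'$ and $\delta$ contributes the new $k = -1$ classes from $H\Phi^d$, yielding the quotient description of the proposition. Finally, the local cohomology formula $HQ_\star^d = H^2_{(b, \sigma^{-2})}(\Gamma, HQ_\star^g)[-1]$ is established exactly as in Proposition \ref{p1} via the \v{C}ech presentation
\[
HQ_\star^g \to (HQ_\star^g)_b \oplus (HQ_\star^g)_{\sigma^{-2}} \to (HQ_\star^g)_{b\sigma^{-2}},
\]
identifying its cokernel (after a shift by $-1$) with the derived tail of $HQ$. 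The step I expect to be the main obstacle is the bookkeeping across the two exceptional columns $k \in \{-1, 0\}$ and the verification that the \v{C}ech-based description agrees with the direct LES output, which requires careful tracking of generators in each $R$-degree.
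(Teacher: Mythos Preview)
Your approach is exactly the one the paper indicates: the paper states that Proposition~\ref{p2} ``follows from the long exact sequence on $RO(\Z/p)$-graded coefficients associated with \rref{emq}'' and then immediately writes \qed, so your argument is a correct fleshing-out of that one-line proof. Your case analysis on $k\in\{-1,0\}$ and the identification of $g_*$ on the $b$-powers are the right details to supply; the only wobble is in the phrase ``yielding the quotient description,'' since dimensionwise $HQ_\star^d$ is larger than $\Gamma'$ (it surjects onto $\Gamma'$ rather than being a quotient of it), but this matches the paper's own loose phrasing and is clarified anyway by the local-cohomology formula.
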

\qed

\begin{figure}
\includegraphics{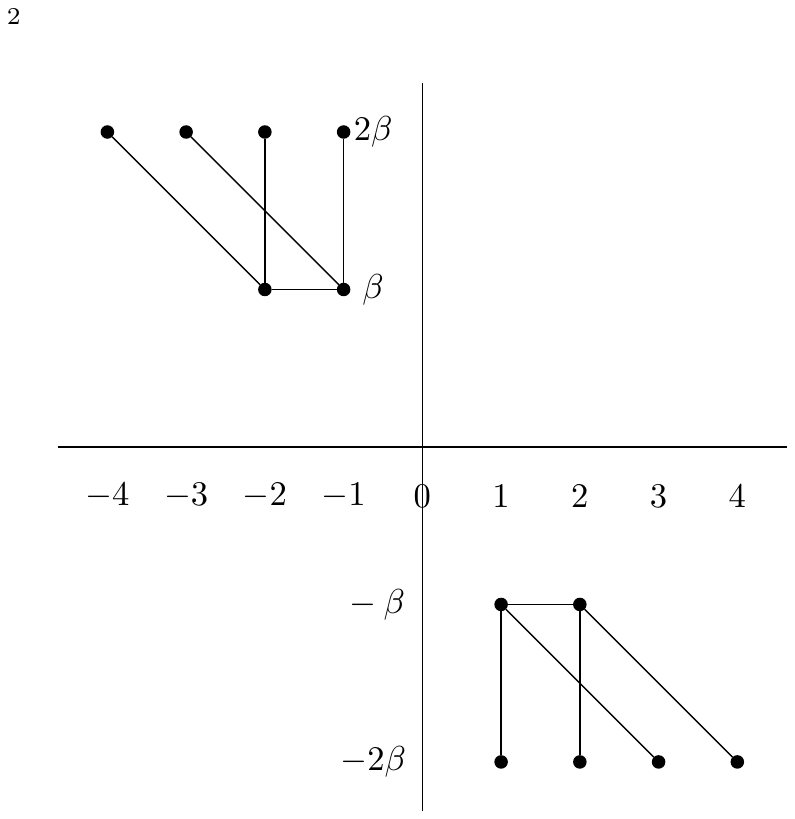}
\caption{The coefficients $HQ_\star$}
\end{figure}

We also put 
$$HM=\Sigma^{\beta-2}HQ$$
for indexing purposes, since the element in degree $0$
is closest to playing the role of the ``generator" of $HM_\star$. (The element
in degree $-1$ is its multiple in Borel cohomology.)

\begin{figure}


\includegraphics{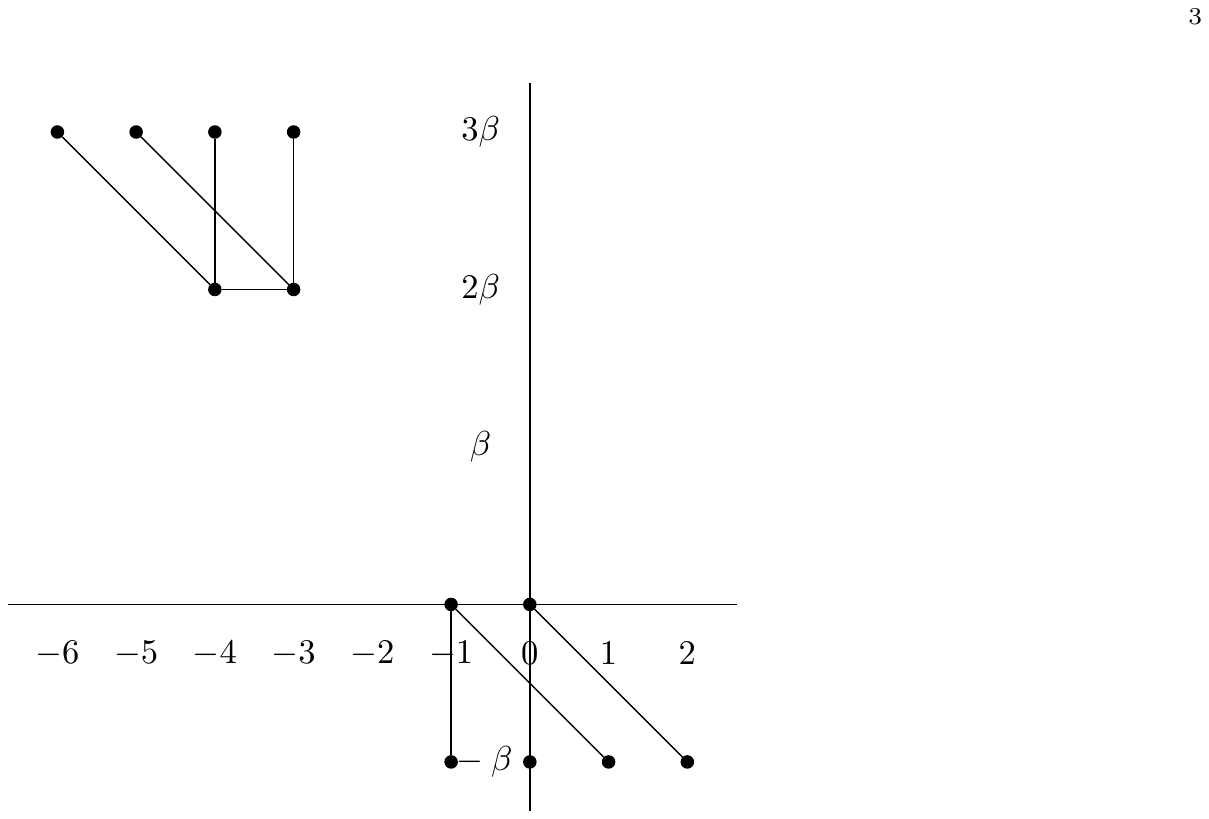}

\caption{The coefficients $HM_\star$}

\end{figure}

Now there is more to the multiplicative structure, however. We are interested in calculating
\beg{ecococo1}{HQ\wedge_{H\underline{\Z/p}}HQ.
}
Note that if we denote by $\overline{\Z/p}$
the co-constant $\underline{\Z/p}$-module (i.e. which is $\Z/p$ on both the fixed and free orbit, and
the corestriction is $1$ while the restriction is $0$), then we have
\beg{ecoconst}{\Sigma^{2-\beta}H\underline{\Z/p}=H\overline{\Z/p}.
}
Now note that at $p=2$, if we denote
the real sign representation by $\alpha$, we have
$$HQ=\Sigma^{1-\alpha}H\underline{\Z/2},$$
so
$$HQ\wedge_{H\underline{\Z/2}}HQ=\Sigma^{2-\beta}H\underline{\Z/2}=H\overline{\Z/2}.$$
For $p>2$, however, the situation is more complicated. Recall that for $1\leq i\leq p$, we have $\F_p[\Z/p]$-modules
$L_i$ on which the generator $\gamma$ of $\Z/p$ acts by a Jordan block of size $i$. For any
$\F_p[\Z/p]$-module, we have a corresponding
$\underline{\Z/p}$-module $\underline{V}$ where on the fixed orbit, we have $V^{\Z/p}$, and the 
restriction is inclusion (see \cite{sk, lewis}). Recall from \cite{sk} that the $\underline{\Z/p}$-modules
$\underline{L_p}$, $\underline{L_1}=\underline{\Z/p}$ are projective, so to calculate \rref{ecococo1},
we can use a projective resolution of $Q$ by these $\underline{\Z/p}$-modules. We have a short exact sequence
\beg{ecocos1}{0\r\underline{L_{p-1}}\r \underline{L_p}\r Q\r 0
}
and we also have a short exact sequence
\beg{ecocos2}{0\r \underline{L_{p+1-j}}\r\underline{L_1}\oplus\underline{L_p}\r\underline{L_j}\r 0.}
Splicing together short exact sequence \rref{ecocos1} with alternating
short exact sequences \rref{ecocos2} for $j=p-1$, $j=2$, we therefore obtain
a projective resolution of $Q$ of the form
\beg{ecocos3}{
\dots \underline{L_1}\oplus \underline{L_p}\r\dots\r \underline{L_1}\oplus \underline{L_p}\r \underline{L_p}
}
We also have 
$$\underline{L_p}\otimes_{\underline{\Z/p}} Q=\underline{L_p},$$
so tensoring \rref{ecocos3} with $Q$ over $\underline{\Z/p}$ gives a chain complex of $\underline{\Z/p}$-modules
of the form
\beg{ecocos4}{\dots Q\oplus \underline{L_p}\r\dots\r Q\oplus \underline{L_p}\r \underline{L_p},}
which gives the following

\begin{proposition}\label{pcocos}
For all primes $p$, we have:
$$Q\otimes_{\underline{\Z/p}}Q=\overline{\Z/2}.$$
For $p=2$, 
$$Tor_i^{\underline{\Z/2}}(Q,Q)=0\;\text{for $i>0$},$$
$$(HQ\wedge_{H\underline{\Z/2}}HQ)=H\overline{\Z/2}.$$
For $p>2$,
$$Tor_i^{\underline{\Z/p}}(Q,Q)=\Phi\;\text{for $i>0$},$$
and we have
\beg{eqqqq}{(HQ\wedge_{H\underline{\Z/p}}HQ)_\star=\Sigma^{2-\beta}HQ_\star\oplus 
\Sigma^2H\underline{\Z/p}_\star^\phi
}
as $H\underline{\Z/p}_\star$-modules.
\end{proposition}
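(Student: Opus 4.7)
The plan is to carry out the $\mathrm{Tor}$-computation algebraically using the explicit resolution just constructed, and then lift to spectra via a hyperTor spectral sequence.

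\textbf{The Tor computation.} First I would verify that \eqref{ecocos3} is a projective resolution of $Q$, by splicing \eqref{ecocos1} with alternating copies of \eqref{ecocos2} (using $j = p-1$ and $j = 2$), so that successive kernels alternate between $\underline{L_{p-1}}$ and $\underline{L_2}$; the terms $\underline{L_1}\oplus\underline{L_p}$ are projective by \cite{sk}. Tensoring termwise over $\underline{\Z/p}$ with $Q$, and using $\underline{L_1}\otimes_{\underline{\Z/p}}Q = Q$ and $\underline{L_p}\otimes_{\underline{\Z/p}}Q = \underline{L_p}$, gives \eqref{ecocos4}. The $H_0$ of this complex is $Q\otimes_{\underline{\Z/p}}Q$; applying $(-)\otimes_{\underline{\Z/p}} Q$ to \eqref{emq} and using $\Phi\otimes_{\underline{\Z/p}} Q = 0$ (disjoint orbit-supports) identifies it with $\overline{\Z/p}$ (which I take to be the intended reading of ``$\overline{\Z/2}$'' in the statement). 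For $p = 2$, the splicing \eqref{ecocos2} does not enter because $\underline{L_{p-1}} = \underline{L_1}$ is already projective: the resolution \eqref{ecocos1} has length one, and inspection of the single remaining differential in the tensored complex shows $\mathrm{Tor}_i = 0$ for $i\geq 1$. For $p>2$, I would compute the differentials in \eqref{ecocos4} directly from \eqref{ecocos2}; the alternating pattern of images and kernels yields $\Phi$ in each degree $i\geq 1$.

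\textbf{From algebra to spectra.} Realize \eqref{ecocos3} geometrically as a simplicial resolution of $HQ$ by $H\underline{\Z/p}$-module spectra with terms $H\underline{L_1}$ and $H\underline{L_p}$, and smash with $HQ$ over $H\underline{\Z/p}$. The resulting hyperTor spectral sequence is
\begin{equation*}
E^2_{s,\star} = H\bigl(\mathrm{Tor}^{\underline{\Z/p}}_s(Q,Q)\bigr)_\star \Longrightarrow (HQ\wedge_{H\underline{\Z/p}}HQ)_\star.
\end{equation*}
For $p = 2$, only $E^2_0$ is nonzero and the answer is $H\overline{\Z/2} = \Sigma^{2-\beta}H\underline{\Z/2}$ by \eqref{ecoconst}. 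For $p>2$, the $E^2$-page has $H\overline{\Z/p}_\star$ in filtration $0$ and a copy of $\Sigma^s H\Phi_\star$ in each filtration $s\geq 1$. I would identify the differentials and extensions by comparing with the isotropy-separation cofiber sequence: the geometric-fixed-point localization of the smash product is determined by the coefficients of $HQ^\phi$ given in \eqref{ehqqf}, which identifies the summand $\Sigma^2 H\underline{\Z/p}^\phi_\star$; the complementary $E\Z/p_+$-localized piece then gives $\Sigma^{2-\beta}HQ_\star$ after resolving one extension class.

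\textbf{Main obstacle.} The most delicate step is the last one, for $p>2$: the $E^2$-page of the hyperTor spectral sequence has overlapping $R$-graded support from $H\overline{\Z/p}$ and the various $\Sigma^s H\Phi$, so nontrivial differentials must cancel the overlap, and a nontrivial extension linking filtration $0$ and filtration $1$ must assemble $H\overline{\Z/p}$ and $\Sigma H\Phi$ into $\Sigma^{2-\beta}HQ$ via the cofibration $H\Phi\to H\overline{\Z/p}\to HQ$ coming from $0\to\Phi\to\overline{\Z/p}\to Q\to 0$. Pinning down both the pattern of differentials and the correctness of this extension class --- by matching against the Borel cohomology / geometric-fixed-point decomposition above --- is the essential content of the proof.
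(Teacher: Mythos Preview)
Your overall strategy matches the paper's: compute the Mackey $\mathrm{Tor}$'s from the resolution \rref{ecocos3}--\rref{ecocos4}, then run the hyperTor spectral sequence \rref{emackeysss} and pin down its behavior via the isotropy separation. The $\mathrm{Tor}$ computation and the $p=2$ case are fine.

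The gap is in your treatment of the geometric fixed points for $p>2$. You write that ``the geometric-fixed-point localization of the smash product is determined by the coefficients of $HQ^\phi$ given in \rref{ehqqf},'' but knowing $HQ^\phi_*$ alone does not hand you $(HQ\wedge_{H\underline{\Z/p}}HQ)^\phi_*$: by \rref{ecocos5} this is $HQ^\phi\wedge_{H\underline{\Z/p}^\phi}HQ^\phi$, which requires a separate $\mathrm{Tor}^B$ computation with $B=\Z/p[t]\otimes\Lambda[u]$ and $J=(u,t)$. The paper carries this out explicitly via the resolution \rref{ecocos11}, computes $J\otimes_B J$ and $\mathrm{Tor}^B_i(J,J)$ for $i>0$, and then must still argue that the resulting spectral sequence \rref{ecocos10} collapses (done by observing that the only possible targets, in the $B\{t\otimes t\}$ summand of filtration $0$, survive because they inject into Borel homology). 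Only after this does one have the independent answer \rref{emackeysss1}, and it is the ``off by one'' discrepancy between this and the $b$-inverted $E^2$ of \rref{emackeysss} that forces exactly the $d^2$ pattern and shows there are no further differentials. Your proposal skips this entire computation, which is the actual technical content.

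A smaller point: you anticipate both differentials and a nontrivial extension between filtrations $0$ and $1$ assembling $H\overline{\Z/p}$ and $\Sigma H\Phi$ into $\Sigma^{2-\beta}HQ$. The paper instead finds a single $d^2$ originating in filtration $0$ (in degrees $2-n\beta$, $n\geq 1$), after which the answer is read off directly; no separate extension argument is made. Your envisioned mechanism via the short exact sequence $0\to\Phi\to\overline{\Z/p}\to Q\to 0$ may be reconcilable with this, but as stated it does not match what actually happens in \rref{emackeysss}, and in any case you cannot verify it without first having the independent $\phi$-computation above.
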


\noindent
{\bf Remark:} It is important to note that the smash product studied in Proposition \ref{pcocos} is over
$H\underline{\Z/p}$. Over $H\underline{\Z}$, the answer is different, and in fact, the homotopy of the smash product is
finite-dimensional (see remark after Proposition \ref{p44} below).

\begin{proof}
All the statements except \rref{eqqqq} follow directly from evaluating the homology of \rref{ecocos4}. The reason the case of $p=2$ is special
is that then we have $2=p$, $p-1=1$, so there will be additional maps alternatingly canceling the fixed points for higher
$Tor$.

\vspace{3mm}
To prove \rref{eqqqq}, first note that its part concerning the $\Z$-graded line 
follows from the other statements. The $R$-graded
case, however, needs more attention, since we are no longer dealing with objects in the heart, so a priori
we merely have a spectral sequence whose $E^2$-term is given by the sum of the $R$-graded coefficients of the
$H\underline{\Z/p}$-modules with the Postnikov decomposition given by the Mackey $Tor$-groups:
\beg{emackeysss}{
E_{rs}^2=H(Tor^{\underline{\Z/p}}_s(Q,Q))_{r+*\beta}\Rightarrow (HQ\wedge_{H\underline{\Z/p}}HQ)_{r+s+*\beta}
}
(The indexing may seem reversed from what one would expect. Note, however, that it is induced by
a cell filtration on representation sphere spectra.)
We need to investigate this spectral sequence.

To this end,
it is worthwhile pointing out an alternative approach. Since all the $H\underline{\Z/p}$-modules considered above have very easy Borel homology, essentially the same information can be recovered by working
on geometric fixed points. Now for $H\underline{\Z/p}$-modules $A,B$, one has
\beg{ecocos5}{
A^\phi\wedge_{H\underline{\Z/p}^\phi}B^\phi=(A\wedge_{H\underline{\Z/p}}B)^\phi.
}
Also, we have a coherent equivalence of categories between $H\underline{\Z/p}^\phi$-modules and 
$(H\underline{\Z/p}^\phi)^{\Z/p}$-modules. Now we have
$$B:=H\underline{\Z/p}^\phi_*=\Z/p[t]\otimes\Lambda[u]$$
with $|t|=2$, $|u|=1$, while
$$J:=HQ^\phi_*=(u,t),$$
(by which we denote that ideal in $B$). By \cite{ekmm}, we therefore have a spectral sequence of the
form
\beg{ecocos10}{Tor^{B}_{r}(J,J)_s\Rightarrow (HQ\wedge_{H\underline{\Z/p}}HQ)^\phi_{r+s}.
}
To calculate the left hand side of \rref{ecocos10}, we have a $B$-resolution $C$ of $J$ of the form
\beg{ecocos11}{\diagram
\dots\rto & B[4]\rto^{-u}\drto^t & B[3]\rto^{-u}\drto^t&B[2]\\
\dot\rto& B[3]\rto_u & B[2]\rto_u& B[1].
\enddiagram
}
Tensoring over $B$ with $J$ and taking homology, we get
$$J\otimes_B J=\Z/p\{u\otimes u,u\otimes t, t\otimes u\}\oplus B\{t\otimes t\}$$
(where the braces indicate a sum of copies indexed by the
given elements) with the $B$-module structure the notation suggests, while
$$Tor_i^B(J,J)=\Z/p\{u,t\}[i+1]\;\text{for $i>0$}.$$
Thus, the spectral sequence \rref{ecocos11} is given by 
$$
E_{rs}^1=\left\{\begin{array}{ll}
\Z/p & \text{if $r=0$ and $s=2,4,5,6,\dots$}\\
\Z/p\oplus \Z/p & \text{if $r=0$ and $s=3$}\\
\Z/p & \text{if $r>0$ and $s=r+2,r+3$}\\
0 & \text{else.}
\end{array}
\right.
$$
Moreover, one can show that the spectral sequence \rref{ecocos10} collapses since
the only possible targets of differentials is the $B\{t\otimes t\}$ in filtration degree $0$, but by comparison with
$H\underline{\Z/p}$, we see that those elements cannot be $0$, since they inject into Borel homology
by the connecting map. Thus, we see that
\beg{emackeysss1}{
(HQ\wedge_{H\underline{\Z/p}}HQ)^\phi_n=\left\{
\begin{array}{ll}
\Z/p &\text{for $n=2$}\\
\Z/p\oplus \Z/p & \text{for $n=3,4,\dots$}\\
0 & \text{else.}
\end{array}
\right.
}
\begin{figure}
\includegraphics{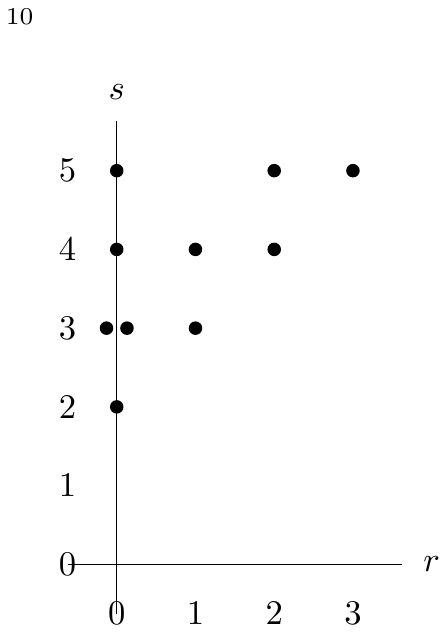}
\caption{The spectral sequence for $(HQ\wedge_{H\underline{\Z/p}}HQ)^\phi_*$}
\end{figure}
Now the $E2$-term \rref{emackeysss} on the $(?)^\phi$-level (obtained by inverting $b$)
is ``off by one" in the sense that we obtain
$$
\begin{array}{ll}
\Z/p &\text{for $n=1$}\\
\Z/p\oplus \Z/p & \text{for $n=2,3,4,\dots$}\\
0 & \text{else.}
\end{array}
$$
This, in fact, detects a single $d^2$-differential in the spectral sequence \rref{emackeysss} originating in the
$H\overline{\Z/p}_\star$-part in degrees 
$$2-n\beta,\;  n=1,2,3,\dots$$
and also proves there cannot be any other differentials, thus yielding \rref{eqqqq} (see Figures 5 and 6).
\end{proof}

\begin{figure}
\includegraphics{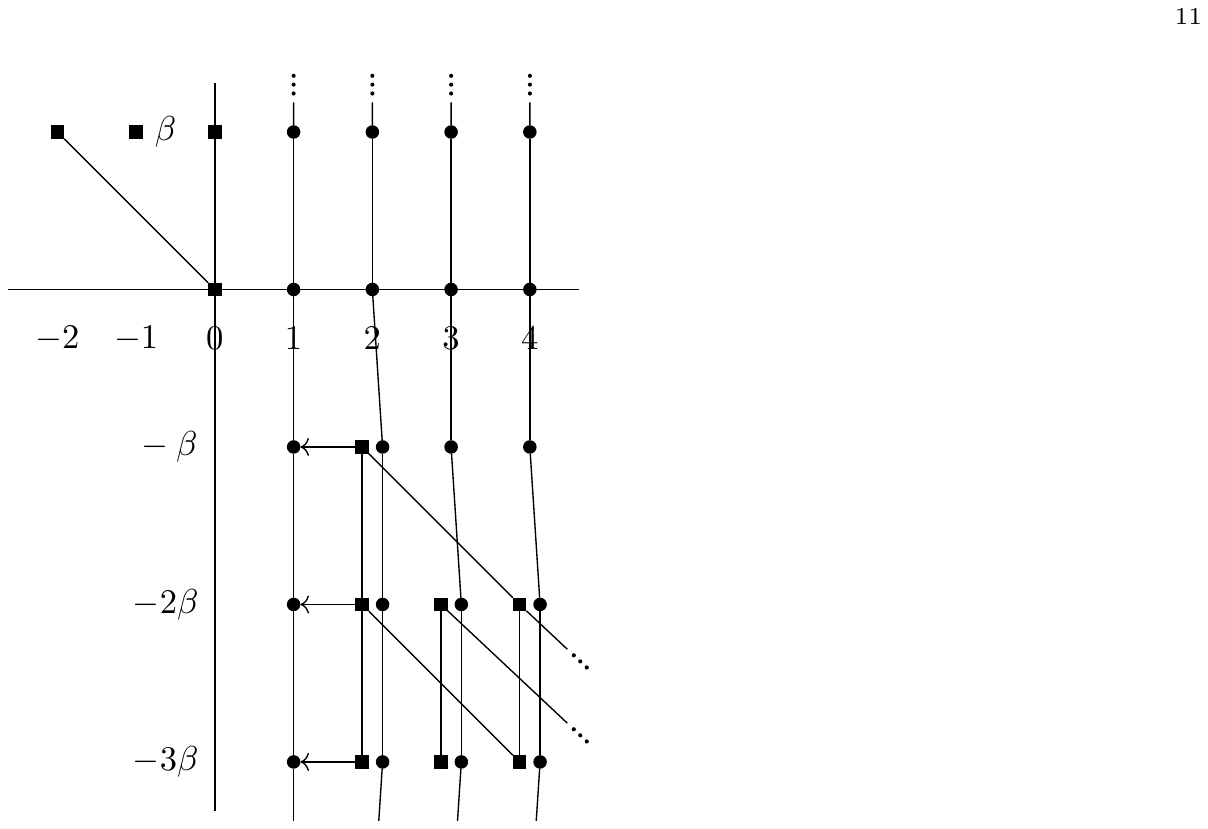}
\caption{The spectral sequence \rref{emackeysss}}
\end{figure}

\begin{figure}
\includegraphics{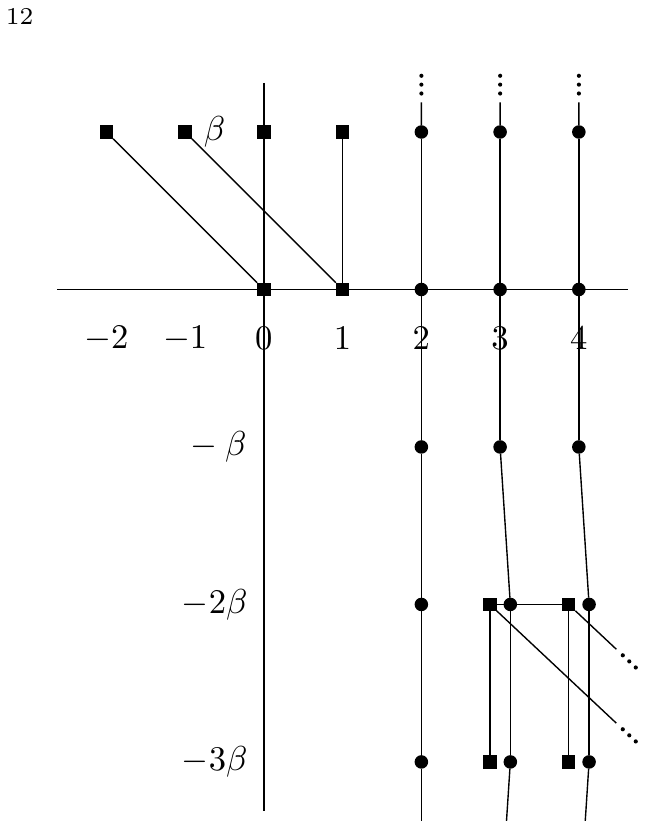}
\caption{$(HQ\wedge_{H\underline{\Z/p}}HQ)_\star$}
\end{figure}

\vspace{3mm}

\vspace{3mm}
Now let $T$ denote the $\Z/p$-equivariant suspension spectrum of the cofiber of the second
desuspension of the $\Z/p$-equivariant
based degree $p$ map
\beg{eswsw}{S^\beta\r S^2.}
(This spectrum was denoted by $T(\theta)$ in \cite{sw}.)
We also denote $HT=H\underline{\Z/p}\wedge T.$ We shall see in Section \ref{slens} that the spectrum
$T$ (up to suspension) maps into the based suspension spectrum of the $\Z/p$-equivariant
lens space $B_{\Z/p}(\Z/p)$, and therefore, the connecting map of the $H\underline{\Z/p}_\star$-homology
long exact sequence of \rref{eswsw} is an isomorphism on the $\Z/p$ in degree $\beta$ (which is the only
degree in which it can be non-trivial for dimensional reason). 

We also see that the
coefficients of $HT$ suggest the possibility of a filtration whose associated
graded pieces are wedges of suspensions of $HM$. Indeed, from the universal property, we readily
construct a morphism
$$HT\r HM,$$
which leads to a cofibration sequence of the form
\beg{eswsw1}{
\Sigma HM\r HT\r HM,
}
which splits additively on $R$-graded coefficients. The $\Z$-graded coefficients of $HT$,
which are $\Z/p$ in degrees $-1$ and $1$, and $\Z/p\oplus\Z/p$ in degree $0$,
generate the two $HM$-copies in \rref{eswsw1} in the above sense.

\begin{figure}
\includegraphics{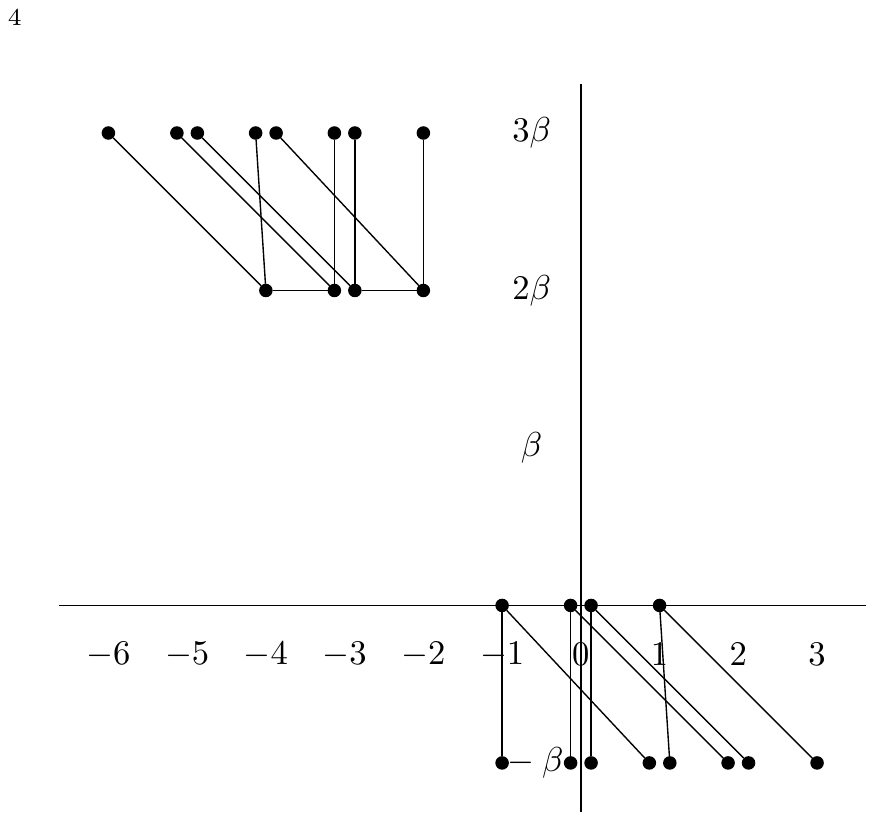}

\caption{The coefficients $HT_\star$}

\end{figure}

The cofibration \rref{eswsw1} can also be seen on the level of Mackey functors. Desuspending \rref{eswsw}
by $\beta$ and smashing with $H\underline{\Z/p}$, we get, by \rref{ecoconst},
a map of the form
$$H\underline{\Z/p}\r H\overline{\Z/p}.$$
Its cofiber can then be realized by a Mackey chain complex 
\beg{ecocos20}{
\underline{\Z/p}\r \overline{\Z/p}
}
set in homological degrees $0,1$, where the differential is $1$ on the fixed orbit and $0$ on the free orbit (this
is, essentially, the only non-trivial possibility). In the derived category of $\underline{\Z/p}$-modules, then, 
\rref{ecocos20} obviously maps to $Q$, with the kernel quasiisomorphic to $Q[1]$.

\vspace{3mm}

The cofibration \rref{eswsw1} does not split. To see that, we observe that 

\begin{proposition}\label{p44}
In the derived category of $p$-local $\Z/p$-equivariant spectra, we have an equivalence
\beg{eswsw2}{
T\wedge T\sim T\vee \Sigma^{\beta-1}T.
}
\end{proposition}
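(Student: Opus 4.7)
The strategy is to reduce Proposition~\ref{p44} to the vanishing of a single connecting map and then verify this vanishing via the isotropy separation cofiber sequence. Smashing the defining cofiber triangle
$$S^{\beta-2}\xrightarrow{\phi}S^0\xrightarrow{i}T\to S^{\beta-1}$$
(where $\phi$ is the twice-desuspended $\Z/p$-equivariant degree-$p$ map used to define $T$) with $T$ on the right yields the triangle
$$\Sigma^{\beta-2}T\xrightarrow{\phi\wedge T}T\to T\wedge T\to \Sigma^{\beta-1}T.$$
The desired equivalence \rref{eswsw2} is exactly the splitting of this triangle, equivalent to $\phi\wedge T\simeq 0$ in $[\Sigma^{\beta-2}T,T]^{\Z/p}$, so the entire proposition reduces to proving the latter.

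The composite $i\circ\phi$ is null by the defining cofiber of $T$, so the restriction of $\phi\wedge T$ to the bottom cell $S^{\beta-2}\hookrightarrow\Sigma^{\beta-2}T$ is null. Via the long exact sequence attached to the cofiber $S^{\beta-2}\to\Sigma^{\beta-2}T\to S^{2\beta-3}$, this lifts $\phi\wedge T$ to a class $\widetilde{\phi}\in\pi^{\Z/p}_{2\beta-3}(T)$, well defined modulo the image of $(\Sigma\phi)^*\colon\pi^{\Z/p}_{\beta-1}(T)\to \pi^{\Z/p}_{2\beta-3}(T)$. It remains to show that some representative of $\widetilde{\phi}$ is null-homotopic.

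For this I would invoke the isotropy separation sequence $E\Z/p_+\wedge X\to X\to\widetilde{E\Z/p}\wedge X$ with $X=F(\Sigma^{\beta-2}T,T)$. On geometric fixed points $\Phi^{\Z/p}(S^{\beta-2})=S^{-2}$ and $\Phi^{\Z/p}(\phi)\in\pi_{-2}(S^0)=0$, so $\Phi^{\Z/p}(\phi\wedge T)=0$ automatically, and in fact $\Phi^{\Z/p}(T)\simeq S^0\vee S^{-1}$ splits. On the underlying non-equivariant level, $T$ restricts to the mod-$p$ Moore spectrum $M(p)$, for which the classical odd-primary splitting $M(p)\wedge M(p)\simeq M(p)\vee \Sigma M(p)$ supplies the required underlying null-homotopy.

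The main technical obstacle is the gluing step: separate null-homotopies on the geometric and Borel sides must be assembled into a genuine equivariant null-homotopy, which requires controlling the connecting map in the isotropy separation sequence and verifying that the secondary obstruction vanishes. The hypothesis $p>2$ enters crucially here, mirroring the classical failure of the Moore-spectrum decomposition at $p=2$; this failure is precisely why Proposition~\ref{p44} is a genuinely odd-primary statement, paralleling the appearance of the extra $\Z/p$-summand in Proposition~\ref{pcocos} for $p>2$.
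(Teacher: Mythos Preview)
Your reduction to showing $\phi\wedge T\simeq 0$ is a valid reformulation of the splitting, and your checks on geometric fixed points and on the underlying spectrum are both correct. However, you explicitly leave the decisive step unfinished: you write that ``the main technical obstacle is the gluing step'' and that one must ``verify that the secondary obstruction vanishes,'' but you do not carry this out. This is essentially the entire content of the proposition---knowing that a map vanishes on $\Phi^{\Z/p}$ and on the underlying spectrum is far from sufficient, since the connecting map in the isotropy separation sequence you invoke has not been analyzed at all. You also slightly conflate the Borel term with the underlying spectrum: the group $[\Sigma^{\beta-2}T,\,E\Z/p_+\wedge T]^{\Z/p}$ is not literally $[M(p),M(p)]$, so even the ``Borel side'' of your argument is incomplete as stated.

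The paper proceeds by a different route. Rather than showing $\phi\wedge T=0$, it directly constructs a retraction $T\wedge T\to T$. Using the double cell filtration of $T\wedge T$ (with associated graded pieces $S^0$, $S^{\beta-1}\vee S^{\beta-1}$, $S^{2\beta-2}$), it first produces a partial splitting on $F_1(T\wedge T)$ and observes that its composition with $T\to S^{\beta-1}$ extends to all of $T\wedge T$; the remaining obstruction therefore lies in $\pi^{\Z/p}_{2\beta-3}(S^0)$. This group is then computed explicitly via the cell structure of $S(2\beta)$ to be $\Z/p$ (here is where $p>2$ enters, through the vanishing of the $p$-primary part of $\pi_1^{\{e\}}(S^0)$). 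The key observation is that this $\Z/p$ is detected after smashing with $H\underline{\Z}$, so it suffices to verify the splitting in $H\underline{\Z}$-modules. The paper does this by an explicit Mackey chain complex computation: it writes $\Sigma^{2-\beta}H\underline{\Z}\wedge T\simeq HQ$, resolves $Q$ projectively over $\underline{\Z}$, tensors with $Q$, and identifies the resulting complex with a direct sum. A final care is needed because there exist non-split surjections in the same degree; the paper checks that the \emph{specific} map coming from the cofiber sequence is the splitting one by composing with one more step of the triangle. Your isotropy-separation outline could in principle be completed, but doing so would require an analysis of comparable depth, which you have not provided.
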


\begin{proof}
The strategy is to smash two copies of the cofibration
\rref{eswsw} together.
This gives a cofibration sequence
\beg{ettsplit}{T\r T\wedge T\r \Sigma^{\beta-1}T.
}
We need to show that the first map \rref{ettsplit} has a left inverse in the derived category. 
To this end, smashing the two cofibration sequences introduces two increasing filtrations on $T\wedge T$ where
the associated graded pieces in degree $0,1,2$ are
\beg{ettsplit1}{S^0, \;S^{\beta-1}\vee S^{\beta-1},\; S^{2\beta-2},}
respectively. We obtain an obvious splitting
$$F_1(T\wedge T)\r T$$
whose composition with the projection
$$T\r S^{\beta-1}$$
further also extends to $T\wedge T$. Therefore, the obstruction to constructing the splitting lies in
the $RO(\Z/p)$-graded $\Z/p$-equivariant homotopy group
\beg{ettsplit2}{\pi_{2\beta-3}^{\Z/p}S^0.
}
To study the group \rref{ettsplit2}, we consider the fibration
$$S^0\r S^{2\beta}\r\Sigma (S(2\beta)_+).$$
Since $\pi_0^{\Z/p}S^3=0$, we can represent a class in \rref{ettsplit2} by a $\Z/p$-equivariant stable map
\beg{ettsplit3}{\alpha:S(2\beta)_+\r S^2.}
The source of \rref{ettsplit3} is a free spectrum, and $\alpha$ is $0$ when restricted to the $1$-skeleton. 
On $S(2\beta)_2/S(2\beta)_1$, the group of homotopy classes of possible maps to $S^2$ is $\Z$. However,
the attaching map of the free $2$-cell to the free $1$-cell is $N_p=1+\gamma+\dots+\gamma^{p-1}$
and thus $p$ times the generator is homotopic to $0$. Thus, the group is actually $\Z/p$. All of these maps
extend to $S(2\beta)$ by the homotopy addition theorem, while the group of possible choices of the extension
lies in
$$\pi_1^{\{e\}}(S^0),$$
which has no $p$-primary component for $p>2$. 
Thus, away from $2$, the obstruction group is $\Z/p$ and is the same as on the level of $H\underline{\Z}$-modules.

In the category of $H\underline{\Z}$-modules, we observe that $\Sigma^{2-\beta}T$ is the homotopy cofiber
of a stable map
\beg{ettmack1}{S^0\r S^{2-\beta}.}
Smashing \rref{ettmack1} with $H\underline{\Z}$ can be realized as the map $\phi$ from the constant Mackey functor
$\underline{\Z}$ to the co-constant Mackey functor $\overline{\Z}$ which is $p$ on the free orbit and $1$ on the
fixed orbit:
\beg{ettmack2}{\phi:\underline{\Z}\r\overline{\Z}.}
We see that this map $\phi$ is injective and its cokernel is $Q$. Thus, we have:
\beg{ettmack3}{\Sigma^{2-\beta}H\underline{\Z}\wedge T=HQ.}
Now denoting by $\mathcal{L}_p$ the principal projective $\underline{\Z}$-module on the free orbit (i.e. the unique
$\underline{\Z}$-module which is the integral regular representation $\mathcal{L}_p$ on the free orbit and $\Z$ on
the fixed orbit, there is a $\underline{\Z}$-projective resolution of $\overline{\Z}$ of the form
\beg{ettmack4}{
\diagram \underline{\Z}\rto^\subset&\underline{\mathcal{L}_p}\rto^{1-\gamma}&\underline{\mathcal{L}_p}.
\enddiagram
}
Thus, given \rref{ettmack2}, we have a $\underline{\Z}$-resolution of $Q$ of the form
\beg{ettmack5}{
\diagram \underline{\Z}\rto^\subset&\underline{\Z}\oplus\underline{\mathcal{L}_p}\rto^{\subset\oplus(1-\gamma)}&\underline{\mathcal{L}_p}.
\enddiagram
}
Thus, $H\underline{\Z}\wedge\Sigma^{4-2\beta}T\wedge T$ can be realized by tensoring \rref{ettmack5} with
$Q$ over $\underline{\Z}$, which gives a chain complex of $\underline{\Z}$-modules of the form
\beg{ettmack6a}{
\diagram Q\rto^{0\oplus\subset}&Q\oplus\underline{L}_p\rto^{\subset\oplus(1-\gamma)}&\underline{L}_p
\enddiagram
}
with the last term in degree $0$, which can also be rewritten as the two-stage chain complex of 
$\underline{\Z}$-modules
\beg{ettmack6}{\diagram
Q\oplus\overline{L}_{p-1}\rto^{\subset\oplus(1-\gamma)}&\underline{L}_p
\enddiagram
}
in degrees $1,0$. Now we have a cofibration sequence
\beg{ettmack7}{\Sigma^{4-2\beta}T\r \Sigma^{4-2\beta}T\wedge T\r \Sigma^{3-\beta}T.
}
We see that the last term can be realized by the chain complex 
\beg{ettmack8}{Q\r 0
}
in degrees $0,1$. There exists a chain map $\psi$ from \rref{ettmack6} to \rref{ettmack8} which is identity on $Q$ and
$0$ on the other components. This map, in fact, has a right inverse given by
$$(1,-(1-\gamma)^{p-2}).$$
Moreover, $Ker(\psi)$ is the chain complex in degrees $1,0$ of the form
\beg{ettmack9}{\diagram
\overline{L}_{p-1}\rto^{1-\gamma}&\underline{L}_p
\enddiagram
}
which can be also written as
\beg{ettmack10}{\diagram
Q\rto^\subset&\underline{L}_{p}\rto^{1-\gamma}&\underline{L}_p
\enddiagram
}
which is \rref{ettmack4} tensored over $\underline{\Z}$ with $Q$, and thus represents
$$H\underline{\Z}\wedge S^{2-\beta}\wedge \Sigma^{2-\beta}T.$$

Thus, we have proved our splitting after smashing with $H\underline{\Z}$ 
in the category of $H\underline{\Z}$-modules. To prove the statement spectrally, however, we need to be 
even more precise, since the surjection from \rref{ettmack6} to \rref{ettmack8} is not unique: It could also 
be non-zero on the second summand (and those surjections do not split). We need to prove specifically that
the surjection from \rref{ettmack6} to \rref{ettmack8} induced by the second map \rref{ettmack7} is $0$
on the second component of the source of \rref{ettmack6}. However, to this end, it suffices to prove that 
the map $\psi$ vanishes when composed with the canonical chain map from \rref{ettmack4} to \rref{ettmack6a}.
We see however that this map is obtained by smashing with $H\underline{\Z}$ the composition
$$S^{2-\beta}\r \Sigma^{2-\beta}T\r\Sigma^{4-2\beta}T\wedge T\r \Sigma^{3-\beta}T,$$
which is $0$ since it involves composing two consecutive maps in a cofibration sequence. Thus, 
our statement is proved.
\end{proof}

\vspace{3mm}

\noindent
{\bf Remark:} From the above proof, one can also read offf the homotopy groups of $HQ\wedge_{H\underline{\Z}}HQ$,
which are concentrated in finitely many degrees
(compare with Proposition \ref{pcocos}).

Smashing \rref{eswsw2} with $H\underline{\Z/p}$, we see that $HT\wedge_{H\underline{\Z/p}}HT$ additively splits 
as a direct sum of $HM_\star$ suspended by $0,1,\beta,\beta-1$.This is not what would happen if
the cofibration of $H\underline{\Z/p}$-modules \rref{eswsw1} split: the higher derived
terms would appear. This will play a role of our description of the 
$\Z/p$-equivariant Steenrod algebra in the subsequent sections.

\begin{figure}
\includegraphics{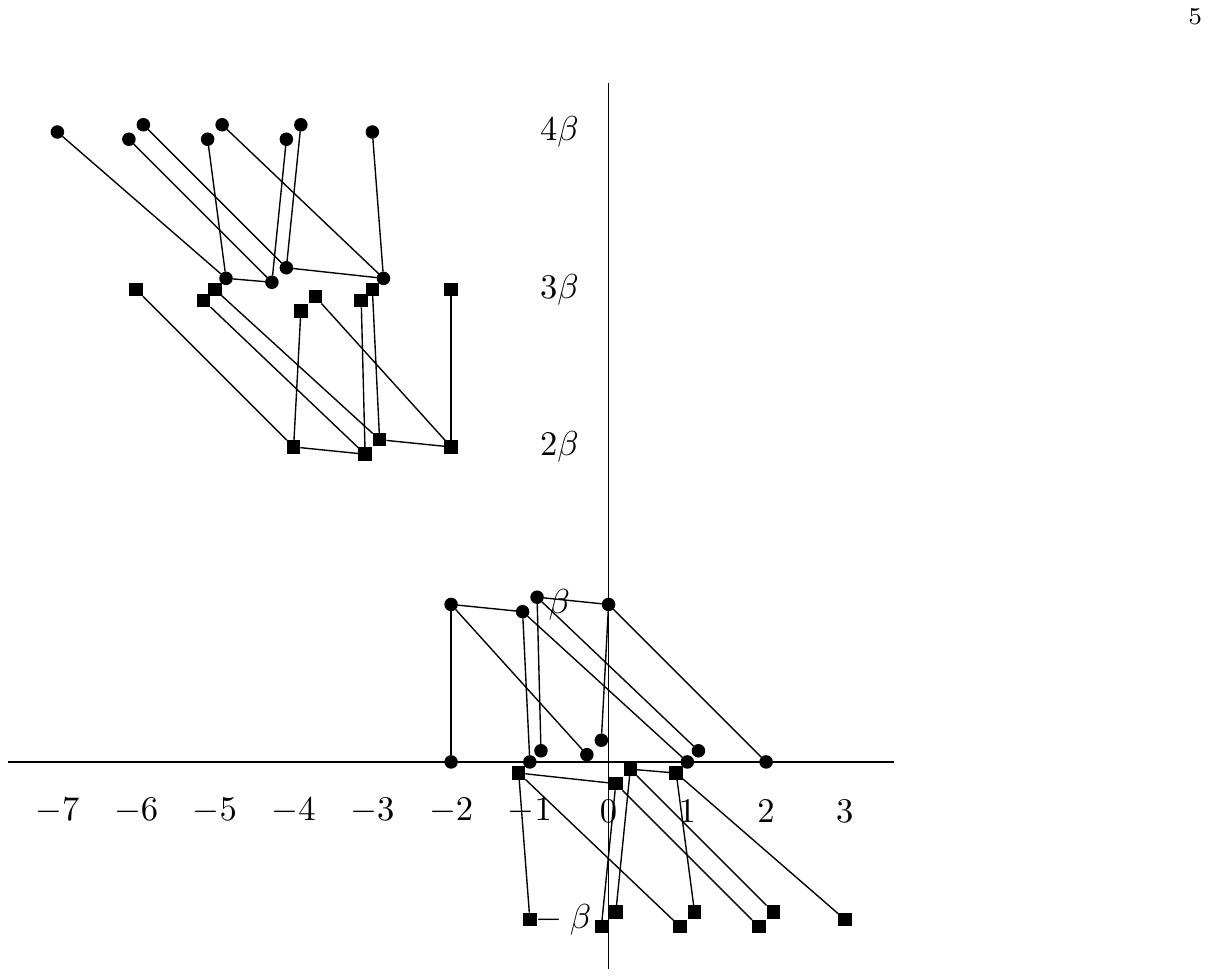}
\caption{The coefficients $(HT\wedge_{H\underline{\Z/p}}HT)_\star$}

\end{figure}

One may, in fact, ask how the ``tidy" behavior \rref{eswsw2} is even possible on $H\underline{\Z/p}$-homology,
given the infinitely many higher $Tor$'s of $Q$ with itself, computed in Proposition \ref{pcocos}. 
We present an explanation in terms of geometric fixed points: The resolution \rref{ecocos11}
in fact gives a short exact sequence of $B$-modules of the form
$$0\r J[1]\r B[1]\oplus B[2]\r J\r 0.$$
On the level of geometric fixed points, the cofibration \rref{eswsw1} in fact realizes this extension.

\vspace{3mm}
\noindent
{\bf Comment:}
It is worth noting that each of the traceless indecomposable modular representaions $L_i$,
$i=1,\dots,p-1$
gives rise to a Mackey functor (which we also denote by $\widetilde{L_i}$) equal to $L_i$ on the free orbit
and $0$ on the fixed orbit. Thus, $\widetilde{L_1}=Q$.
The $H\underline{\Z/p}$-modules $H\widetilde{L_i}$, $i=1,\dots,p-1$ all have
additively isomorphic $RO(\Z/p)$-graded coefficients, even though no morphism of spectra
induces this isomorphism (passing to Borel homology, this says there is no map between
$L_i$, $L_j$ for $i\neq j$ which would induce an isomorphism in group homology).
Of course, the non-equivariant coefficients of $HL_i$ are $L_i$ in degree $0$, so we see they all
are of
different dimensions. The reason we only encounter $H\widetilde{L_1}=HQ$ in our calculations is that
in the Borel homology spectrum of $H\underline{\Z/p}\wedge H\underline{\Z/p}$ is a wedge
sum of suspended copies of $H\underline{\Z/p}^b$.

This raises the question as to whether in general a morphism of spectra $f:X\r Y$ which induces
an isomorphism in $RO(\Z/p)$-graded coefficients is a weak equivalence. This is obviously true for $p=2$ 
by the cofibration sequence
$$\Z/2_+\r S^0\r S^\alpha,$$
but it is false for $p>2$: Consider the Mackey functor which is equal to the traceless complex
representation $\beta$ on the free orbit and $0$ on the fixed orbit. (We will also denote it by $\beta$.)
Then the $\Z/p$-Borel homology spectrum $H\beta^b$ has trivial $RO(\Z/p)$-graded coefficients, since
the cohomology theory is $(\beta-2)$-periodic, and the group homology of $\Z/p$ with coefficients in $\beta$
is $0$. (Also note that for $p=2$, this Borel homology will be non-zero in degree $\alpha-1$
where $\alpha$ is the $1$-dimensional real sign representation.)

On the other hand, call an equivariant spectrum $X$ {\em $p$-complete} when its canonical map
into the homotopy inverse limit of $X\wedge M\Z/p^r$ is an equivalence.
Then a morphism $f:X\r Y$ of $p$-complete bounded below $\Z/p$-equivariant spectra which 
induces an isomorphism of $RO(\Z/p)$-graded coefficients is a weak equivalence. To see this, since we already
know $f^\phi$ is an equivalence, it suffices to consider the case when $X,Y$ are free. Equivalently,
we must show that a bounded below free $\Z/p$-spectrum whose fixed point coefficients are
$0$ is $0$. So assume it is not $0$, and consider the bottom dimensional degree non-equivariant 
$\Z[\Z/p]$-module $V$ of its coefficients. But then $V/(1-\gamma)\neq 0$, since on a modular representation of
$\Z/p$, $1-\gamma$ is never onto. Thus, the coefficients of the fixed point spectrum are non-zero in
the same degree.

We do not know whether the bounded below assumptions can be removed.

\section{Cohomology of the equivariant projective spaces and lens spaces}\label{slens}

The $\Z/p$-equivariant complex projective space $\C P^\infty_{\Z/p}$ can be identified
with the space of complex lines on the complete complex $\Z/p$-universe $\mathcal{U}$.
An explicit decomposition 
\beg{eflag}{\mathcal{U}=\bigoplus_{i\in \N_0} \alpha_i}
is called a {\em flag}. It leads to a filtration
$$F_n(\C P^\infty)=P(\alpha_0\oplus\dots\oplus \alpha_n)$$
We have
$$F_n(\C P^\infty)/F_{n-1}(\C P^\infty)\cong S^{\alpha_n^{-1}(\alpha_0\oplus\dots\oplus \alpha_{n-1})}.$$
This leads to a spectral sequence
\beg{ess1}{E_1=\bigoplus_{n\in\N_0}H\underline{\Z/p}_{\star-\alpha_n^{-1}(\alpha_0\oplus\dots\oplus 
\alpha_{n-1})}
\Rightarrow H\underline{\Z/p}^\star\C P^\infty_{\Z/p}.}

\begin{figure}

\includegraphics{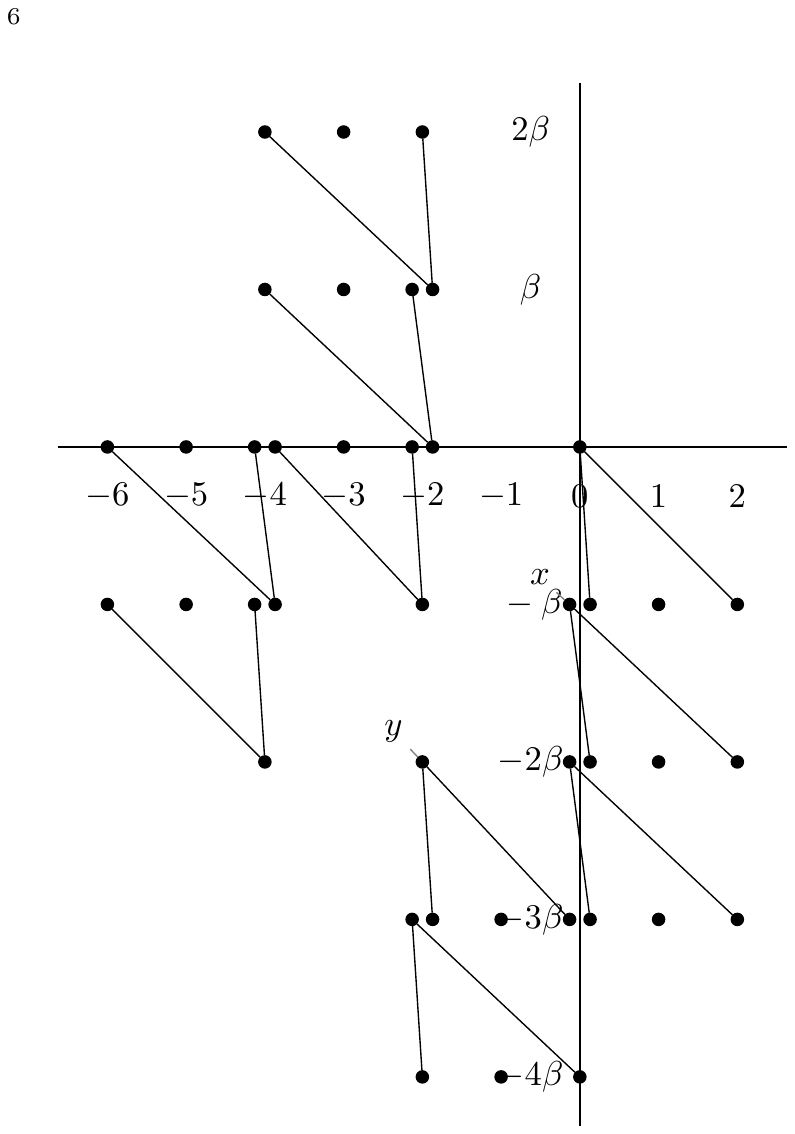}

\caption{The flag spectral sequence for $B_{\Z/3}(S^1)$}

\end{figure}

Whether or not this collapses depends on the flag. For the {\em regular flag}
$$\alpha_i=\beta^i,$$
(remembering our convention on indexing), the free generators of the copies of $H\underline{\Z/p}_\star$
are in dimensions
\beg{ess2}{\begin{array}{l}0,-\beta,-2\beta,\dots,-(p-1)\beta,\\
-(p-1)\beta-2,-p\beta-2,\dots,-2(p-1)\beta-2,\\
-2(p-1)\beta-4,\dots,-3(p-1)\beta-4,\\
\dots
\end{array}
}
We see from Proposition \ref{p1} that there is no element in \rref{ess1} in total dimension $-\beta-1$
or $-3-(p-1)\beta$, and therefore 
the generator $x$ in dimension $-\beta$ and the generator $y$ in dimension $-2-(p-1)\beta$
are permanent cycles, and thus, the spectral sequence collapses, and thus 
$H\underline{\Z/p}^\star\C P^\infty_{\Z/p}$ is a free $H\underline{\Z/p}_\star$-module.

\begin{proposition}\label{p33}
One has
$$H\underline{\Z/p}^\star\C P^\infty_{\Z/p} =H\underline{\Z/p}_\star[x,y]/(\sigma^{-2}y-x^p+b^{p-1}x).
$$
\end{proposition}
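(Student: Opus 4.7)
Given the collapse of the flag spectral sequence (established above), the additive description of $H\underline{\Z/p}^\star(\C P^\infty_{\Z/p})$ as a free $H\underline{\Z/p}_\star$-module on $\{x^iy^j:0\leq i\leq p-1,\ j\geq 0\}$ is already in hand, where $x\in H\underline{\Z/p}^\beta$ and $y\in H\underline{\Z/p}^{2+(p-1)\beta}$ are permanent cycles lifting the filtration-$1$ and filtration-$p$ generators, respectively. The class $x$ may be identified with the equivariant first Chern class of the tautological line bundle on $\C P^\infty_{\Z/p}$. The only task remaining is to determine the single relation expressing $x^p$, which lies in cohomological degree $p\beta$, in this basis.

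The plan is a two-step computation. First, on the finite subspace $\C P^{p-1}_{\Z/p} = P(\alpha_0\oplus\cdots\oplus\alpha_{p-1}) = P(\C[\Z/p])$, the equivariant projective bundle formula reads $\prod_{i=0}^{p-1}(x - e(\beta^i)) = 0$. Using the $\beta^i$-$\beta$ periodicity of Section \ref{prelim} to place each $e(\beta^i)$ in $H\underline{\Z/p}^\beta = \Z/p\cdot b$, and comparing with Borel cohomology (where the additive formal group law for $H\F_p$ gives $c_1(\beta^i) = i\,c_1(\beta)$, and the forget map is an isomorphism on the relevant $RO$-degrees), one obtains $e(\beta^i) = ib$. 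Therefore $\prod_{i=0}^{p-1}(x - ib) = x^p - b^{p-1}x$ by the classical Frobenius identity over $\F_p$, giving $x^p = b^{p-1}x$ in $H\underline{\Z/p}^\star(\C P^{p-1}_{\Z/p})$.

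Second, to lift to $\C P^\infty_{\Z/p}$, observe that $x^p - b^{p-1}x$ lies in the kernel of the restriction $H\underline{\Z/p}^\star(\C P^\infty_{\Z/p}) \to H\underline{\Z/p}^\star(\C P^{p-1}_{\Z/p})$, which is the $H\underline{\Z/p}_\star$-submodule spanned by monomials $x^iy^j$ with $j\geq 1$. A dimension check against Proposition \ref{p1} shows that in cohomological degree $p\beta$, the only such basis monomial admitting a nonzero coefficient is $\sigma^{-2}y$, with coefficient in $H\underline{\Z/p}^{\beta-2} = \Z/p\cdot\sigma^{-2}$; every other $x^iy^j$ with $j\geq 1$ would require a coefficient in an $H\underline{\Z/p}_\star$-bidegree that is zero by Proposition \ref{p1}. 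Hence $x^p - b^{p-1}x = c\,\sigma^{-2}y$ for some $c\in\Z/p$; that $c\neq 0$ follows from comparing once more with Borel cohomology, where $x^p - b^{p-1}x$ is visibly a nonzero polynomial in the Borel Chern class on $\C P^\infty_{\Z/p}$ (there is no polynomial relation in the infinite-dimensional case). Absorbing $c$ into the normalization of the permanent cycle $y$, which is defined only up to unit, yields the asserted relation $\sigma^{-2}y = x^p - b^{p-1}x$.

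The main obstacle is the identification $e(\beta^i) = ib$ equivariantly. Since $H\underline{\Z/p}$ is not globally complex-orientable equivariantly in the usual sense, this transfer from Borel requires the injectivity of the forget map on $H^\beta$, careful bookkeeping with the $\beta^i$-$\beta$ periodicity, and using $\sigma^2\cdot e(-) = c_1(-)$ in Borel to pass from the familiar Borel Chern classes $c_1(\beta^i) = i\sigma^2 b$ to the desired equivariant Euler classes.
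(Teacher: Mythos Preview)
Your argument is correct and rests on the same computation the paper uses: the identity $\prod_{i=0}^{p-1}(x-ib)=x^p-b^{p-1}x$ over $\F_p$, interpreted as the Euler class of the regular representation in Borel cohomology. The paper's proof is more direct---it simply identifies the Borel image of the filtration-$p$ generator $y$ with this Euler class and reads off the relation---whereas you route through the finite piece $\C P^{p-1}_{\Z/p}=P(\C[\Z/p])$ first and then lift via a restriction-kernel argument. Your Step~2 dimension check against Proposition~\ref{p1} is actually sharper than the paper's treatment of possible derived-tail counterterms (the paper waves these away as absorbable into the choice of $y$; your degree count shows none can occur). One small caution: invoking an ``equivariant projective bundle formula'' in genuine $H\underline{\Z/p}$-cohomology is a bit glib, since the theory is not complex-oriented in the usual sense; but since you immediately pass to Borel to identify the Euler classes anyway, the cleaner phrasing is simply to expand $x^p$ in the free basis $1,x,\dots,x^{p-1}$ and determine the coefficients by comparison with Borel cohomology (where the ordinary projective bundle formula does hold), which is what your argument in fact accomplishes.
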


\begin{proof}
The collapse of the spectral sequence was already proved. Thus, what remains to prove is the multiplicative
relation. To this end, we work in Borel cohomology. (This could, in principle, generate counterterms in the
$\Gamma^\prime$ tail, but since it is $\sigma^{-2}$-divisible, that could be corrected by a different choice
of the generator $y$.) 

Now in Borel cohomology, we are essentially working in the cohomology of the space $\C P^\infty\times B\Z/p$.
From this point of view, it is convenient to treat the periodicity $\sigma^{2}$ as the identity, so from this
point of view, we have
$$H^*(\C P^\infty\times B\Z/p;\Z/p)=\Z/p[x,b]\otimes \Lambda[u]$$
where $x,b$ have cohomological dimension $2$ and $u$ has cohomological dimension $1$.
The computation of the element $y$ from this point of view then amounts to computing the Euler class
of the regular complex representation of $\Z/p$. This is
$$x(x+b)(x+2b)\dots(x+(p-1)b)=x^p-b^{p-1}x.$$
\end{proof}

Now similarly as in Milnor \cite{milnor}, we have, for a $\Z/p$-space $X$, a multiplicative map
\beg{emilnor1}{\lambda:H^\star(X)\r H^\star(X)\widehat{\otimes} A_\star}
where the $\widehat{\otimes}$ denotes the tensor product completed at $(b)$. For $X=\C P^\infty_{\Z/p}$,
we get
\beg{emilnor2}{\lambda(x)=x\otimes 1+\sum_{n\geq 1}y^{p^{n-1}}\otimes\underline{\xi}_n
}
and
\beg{emilnor3}{\lambda(y)=y\otimes 1+\sum_{n\geq 1}y^{p^n}\otimes\underline{\theta}_n
}
where the dimensions are given by
$$|\underline{\xi}_n|=2p^{n-1}+(p^n-p^{n-1}-1)\beta,$$
$$|\underline{\theta}_n|=2(p^{n}-1)+(p-1)(p^n-1)\beta.$$
From co-associativity, we can further conclude that, writing
$$\widetilde{\psi}(t)=\psi(t)-t\otimes1-1\otimes t, $$
we have
\beg{ecoprod1}{\widetilde{\psi}(\underline{\xi}_n)=\sum\underline{\theta}_i^{p^{n-i-1}}\otimes\underline{\xi}_{n-i},
}
\beg{ecoprod2}{\widetilde{\psi}(\underline{\theta}_n)=\sum\underline{\theta}_i^{p^{n-i}}
\otimes \underline{\theta}_{n-i}
.}

\vspace{3mm}
The picture becomes a little less tidy when we calculate $H\underline{\Z/p}^\star B_{\Z/p}(\Z/p)$.
For a model of $B_{\Z/p}(\Z/p)$, we use the quotient of the unit sphere in $\mathcal{U}$ by the action 
of $\Z/p\subset S^1$. Now a flag \rref{eflag}, we have a filtration with
$$F_{2n+1}B_{\Z/p}(\Z/p)=S(\alpha_0\oplus\dots\oplus\alpha_n)/(\Z/p)$$
$$F_{2n}B_{\Z/p}(\Z/p)=\{(x_0,\dots,x_n)\in S(\alpha_0\oplus\dots\oplus\alpha_n)\mid Arg(x_n)=2k\pi/p\}/
(\Z/p).$$
We have
$$F_{2n}/F_{2n-1}\cong S^{(\alpha_0\oplus\dots \oplus\alpha_{n-1})\alpha_n^{-1}},$$
$$F_{2n+1}/F_{2n}\cong S^{(\alpha_0\oplus\dots \oplus\alpha_{n-1})\alpha_n^{-1}\oplus 1_\R}.$$
Thus, we have a spectral sequence
\beg{esss1a}{E_1=\bigoplus_{n\in \N_0}H\underline{\Z/p}_\star F_n/F_{n-1}\Rightarrow 
H\underline{\Z/p}B_{\Z/p}(\Z/p)
}
If we use the regular flag, the generators of the summands will be in (homological) degrees:
$$\begin{array}{l}
0,-1,-\beta, -\beta-1,\dots, -(p-1)\beta, -(p-1)\beta-1,\\
-(p-1)\beta-2, -(p-1)\beta-3,\dots,-2(p-1)\beta-2, -2(p-1)-3,\\
-2(p-1)-4,\dots
\end{array}
$$

\begin{figure}
\includegraphics{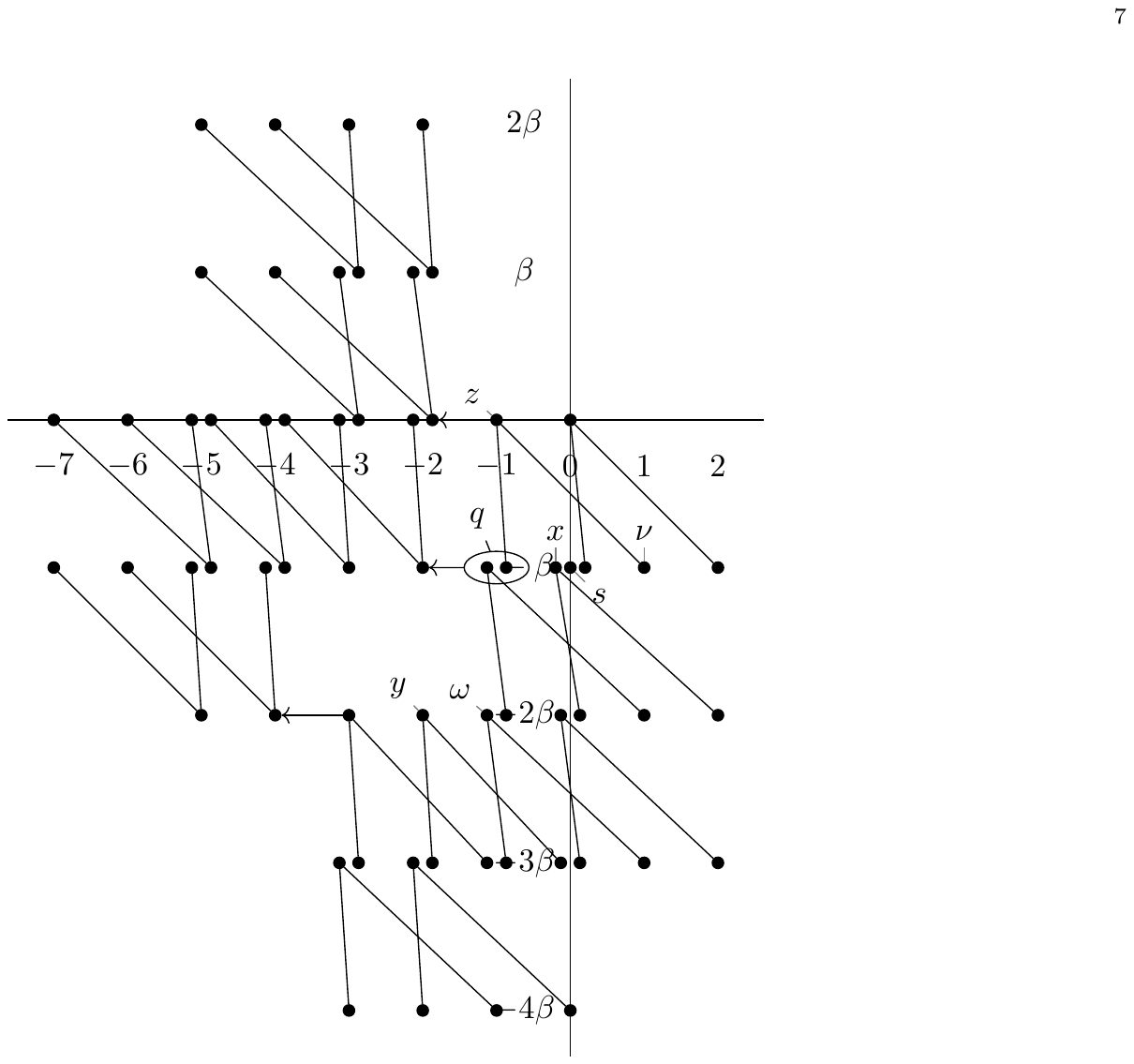}
\caption{The flag spectral sequence for $B_{\Z/3}(\Z/3)$}
\end{figure}

The difference however now is that in the spectral sequence \rref{esss1a}, the generator $z$ in
degree $-1$ supports a $d_1$ differential. To see this, otherwise, it would be a permanent cycle,
and hence so would its Bockstein.  Now the image of $x$ in Borel cohomology is a non-trivial element of
$$H^1(\Z/p\times\Z/p;\Z/p),$$
so the image of $\beta(z)$ in Borel cohomology would be non-zero. However, we see that
in the $E_1$-term of the spectral sequence \rref{esss1a}, all the elemenets of dimension 
$-2$ have image $0$ in Borel cohomology. 

There is a unique target of this differential, and all other differentials originate in
$$z\cdot H\underline{\Z/p}^\star \C P^\infty_{\Z/p}.$$
One also notes that 
\beg{eeee1}{z\cdot x^{p-1}}
is a permanent cycle.
Bookkeeping leads to the following:

\vspace{5mm}

\begin{proposition}\label{p10}
We have
\beg{ep101}{\begin{array}{l}H\underline{\Z/p}^\star B_{\Z/p}(\Z/p)=
\\
H\underline{\Z/p}_\star[y]\otimes
\Lambda[z\cdot x^{p-1}]\oplus 
(HM_\star[x,y]/(\sigma^{-2}y-x^p+b^{p-1}x))
\otimes \Z/p\{x,\nu\}.
\end{array}}
(See Figure 10 for the element $\nu$.)
\end{proposition}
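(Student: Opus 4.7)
The plan is to solve the flag spectral sequence \rref{esss1a} for the regular flag, paralleling the argument for $\C P^\infty_{\Z/p}$ in Proposition~\ref{p33} but now with genuine differentials. First I would list the $E_1$-generators in the stated homological degrees, noting that the odd-filtration cells contribute $z$-multiples of the classes coming from the corresponding even-filtration cells in the projective-space flag, shifted down in homological degree by~$1$.

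The key input is the value of $d_1(z)$. If $z$ were a permanent cycle, then so would be its integral Bockstein $\beta(z)$. Under the comparison map to Borel cohomology $H^*(B\Z/p \times B\Z/p; \Z/p)$ the image of $\beta(z)$ is a nonzero class in ordinary $H^2$, whereas every $E_1$-generator in total homological degree $-2$ except one maps to zero in Borel cohomology. This forces $d_1(z)$ to be a unit multiple of that distinguished surviving class. All other potentially non-zero differentials originate in products $z \cdot x^i y^j$, and $d_1$ on each such product is then determined by Leibniz, by $d_1(z)$, and by the relation $\sigma^{-2} y - x^p + b^{p-1} x = 0$ from Proposition~\ref{p33}. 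A direct computation shows that $d_1(z \cdot x^{p-1})$ is forced into the summand annihilated by the projective-space relation, so $z \cdot x^{p-1}$ is a permanent cycle; this produces the $\Lambda[z x^{p-1}]$ factor in \rref{ep101}. Higher differentials $d_r$ for $r \geq 2$ then have no room to act, either by total-degree bookkeeping or because what remains already has the correct rank predicted by the Borel-cohomology calculation.

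Next, I would identify the $HM_\star$-summands. After the $d_1$ differentials, each surviving pair of neighboring flag-cell summands (the $\Gamma^\prime$-tail of an odd-filtration $H\underline{\Z/p}_\star$-summand together with the cokernel of $d_1$ in the adjacent even-filtration summand) assembles into precisely one copy of $HM_\star$: this is the cohomological shadow of the cofibration pattern already recorded in Proposition~\ref{p2} and of the extension analyzed in Proposition~\ref{pcocos}. Tracking the filtration degrees at which these pairings occur yields exactly two families of $HM_\star$-summands, one family with generators multiplying $x$ and one family with generators multiplying a new class $\nu$ that detects the extension between the two cell cohomologies. The relation $\sigma^{-2} y - x^p + b^{p-1} x$ pulls back from $\C P^\infty_{\Z/p}$ along the canonical projection $B_{\Z/p}(\Z/p) \to \C P^\infty_{\Z/p}$, and all residual ambiguity in the generators (living in the $\sigma^{-2}$-divisible derived tails) is absorbed by suitable choices of lifts, exactly as in the proof of Proposition~\ref{p33}.

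The main obstacle is pinning down $d_1(z)$ and then showing cleanly, by Borel-cohomology comparison, that the pattern of surviving $H\underline{\Z/p}_\star$-pieces and lonely $\Gamma^\prime$-tails really does organize into $HM_\star$-summands with the correct extension classes; in particular, identifying the new class $\nu$ and verifying that it, together with $x$, exhausts the $HM_\star$-contribution requires the same Borel-comparison argument that pinned down the Euler-class $y$ in Proposition~\ref{p33}.
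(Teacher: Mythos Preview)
Your plan is essentially the paper's: run the regular-flag spectral sequence \rref{esss1a}, argue that $z$ supports a $d_1$ via a Bockstein/Borel-cohomology contradiction, propagate by Leibniz over $z\cdot H\underline{\Z/p}^\star\C P^\infty_{\Z/p}$, observe that $z\cdot x^{p-1}$ survives, and then bookkeep the remaining pieces. The paper records exactly that much before writing ``Bookkeeping leads to the following.''

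There is, however, a logical slip in your Bockstein step. You assert that every $E_1$-class in total degree $-2$ \emph{except one} maps to zero in Borel cohomology, and that this forces $d_1(z)$ onto that distinguished class. But if some class in degree $-2$ really had nonzero Borel image, there would be no contradiction with $z$ being permanent: $\beta(z)$ could simply be that class in the abutment. The paper's (correct) claim is that \emph{all} $E_1$-classes in total degree $-2$ have zero Borel image --- the ones that are nonzero all lie in the derived tails $\Gamma^\prime$ of the relevant $H\underline{\Z/p}_\star$-summands --- and it is this vanishing that contradicts the survival of $z$. Once $d_1(z)\neq 0$ is established, its target is then pinned down simply by filtration: there is a unique possible target for a $d_1$ originating in filtration~$1$. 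With this correction, the rest of your outline goes through; your discussion of how the surviving pieces assemble into $HM_\star$-copies is more detail than the paper gives, but is in the right spirit.
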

\qed

\begin{figure}


\includegraphics{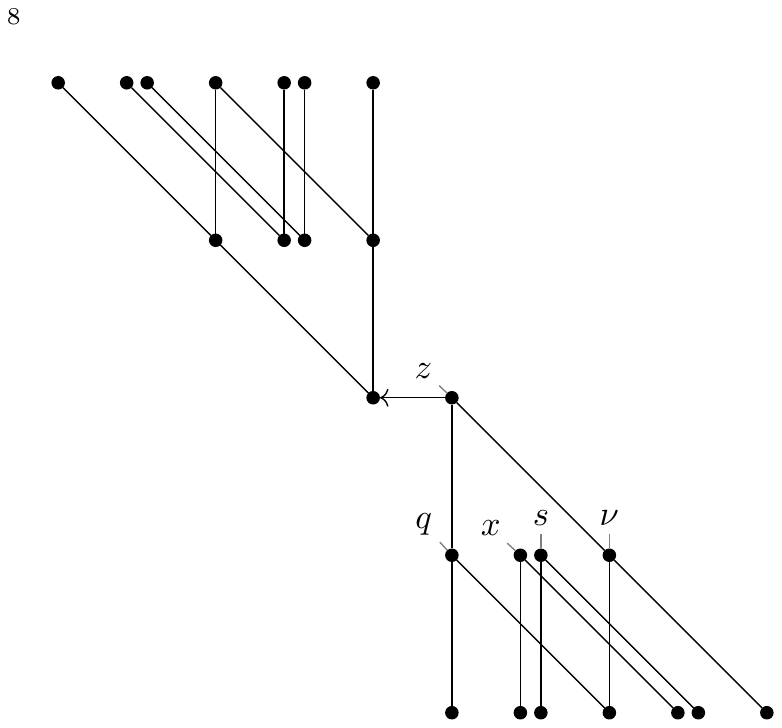}

\caption{The $x$ and $z$ towers in the $B_{\Z/3}(\Z/3)$-flag spectral sequence, p.1}

\end{figure}

\begin{figure}


\includegraphics{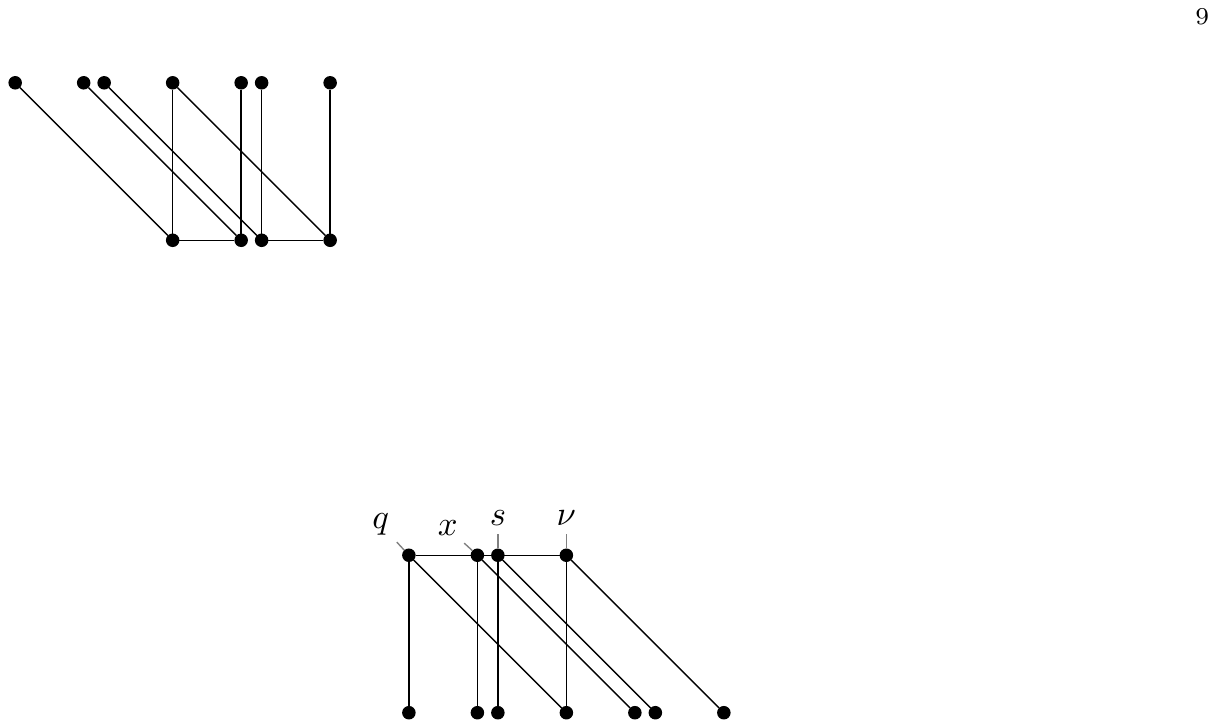}

  \caption{The $x$ and $z$ towers in the $B_{\Z/3}(\Z/3)$-flag spectral sequence, p.2}

\end{figure}

In Proposition \ref{p10}, the ``polynomial generators" at $HM_\star$ just mean
suspension by dimensional degree of the given monomial. 
In particular, we have canonical elements $q\in H^{1+\beta}B_{\Z/p}(\Z/p)$, $s\in H^{\beta}
B_{\Z/p}(\Z/p)$,  $\nu\in H^{\beta-1}
B_{\Z/p}(\Z/p)$ represented 
by 
\beg{ep102}{bz-xu\in x\cdot HM_\star, \;z\sigma^{-2}u\in \nu\cdot HM_\star, z\sigma^{-2}\in \nu\cdot HM_\star.}
(See figure 10; whiskers point to the names of the elements concerned.)
In \rref{ep102}, the name of the elements comes from Borel cohomology, which we write as 
elements of the appropriate terms of \rref{ep101}.

We also have an element $\omega\in H^{(p-1)\beta+1}(B_{\Z/p}(\Z/p))$ which represents
\rref{eeee1}. One notes, however, that this is only correct in the asociated graded object of
our filtration. To determine the exact image in Borel cohomology, we note that we must have
$$\beta(\omega)=y,$$
since $y$ is the additive generator of $H\underline{\Z/p}^{(p-1)\beta+2}B_{\Z/p}(\Z/p)$. This gives
$$\omega\mapsto z(\sigma^{2-2p}t^{p-1}-b^{p-1}).$$

Now we can write
$$\lambda(q)=q\otimes 1 +\sum_{n\geq 1} y^{p^{n-1}}\otimes \widehat{\xi}_{n},$$
\beg{essss}{\lambda(s) = s \otimes 1 + q \otimes (\tau_0 + b^{p-2} \widehat{\xi}_1) +
\sum_{n \geq 1} y^{p^{n-1}} \otimes \widehat{\tau}_n,}
\beg{enununu}{\lambda(\nu) = \nu \otimes 1 - q \otimes b^{p-2} \underline{\xi}_1 + x \otimes
(\tau_0 + b^{p-2} \widehat{\xi}_1) + \sum_{n\geq 1} y^{p^{n-1}} \otimes \underline{\tau}_n.}
The somewhat complicated form of
the right hand side of formulas \rref{essss}, \rref{enununu} is forced by
considering which elements exist in $H\underline{\Z/p}^\star B_{\Z/p}(\Z/p)$.
We can also write
$$\lambda(\omega)=\omega\otimes 1 +\sum_{n\geq1} y^{p^n}\otimes\underline{\mu}_n +\dots,$$
however, the $\dots$ indicate that there will be other summands.
The dimensions are then given by
$$|\widehat{\xi}_n|=|\underline{\xi}_n|-1,$$
$$|\underline{\tau}_n|=|\underline{\xi}_n|+1,$$
$$|\widehat{\tau}_n|=|\underline{\tau}_n|-1=|\underline{\xi}_n|.$$
$$|\underline{\mu}_n|=|\underline{\theta}_n|+1.$$
Co-associativity then implies
\beg{ecoprod1a}{\widetilde{\psi}(\widehat{\xi}_n)=\sum\underline{\theta}_i^{p^{n-i-1}}\otimes\widehat{\xi}_{n-i},
}
\beg{ecoprod2a}{\widetilde{\psi}(\widehat{\tau}_n)=\sum\underline{\theta}_i^{p^{n-i-1}}
\otimes\widehat{\tau}_{n-i}+\widehat{\xi}_n\otimes
(\tau_0+\widehat{\xi}_1b^{p-2})
,}
\beg{ecoprod3a}{\widetilde{\psi}(\underline{\tau}_n)=\sum\underline{\theta}_i^{p^{n-i-1}}
\otimes\underline{\tau}_{n-i}+\widehat{\xi}_n\otimes\underline{\xi}_1b^{p-1}+\underline{\xi}_n\otimes
(\tau_0+\widehat{\xi}_1b^{p-2})
.}

We can now re-state Proposition \ref{p10} more precisely, in fact determining the multiplicative
structure of $H\underline{\Z/p}^\star B_{\Z/p}(\Z/p)$ completely:

\begin{proposition}\label{pzpzp}
The module $H\underline{\Z/p}^\star B_{\Z/p}(\Z/p)$ is additively isomorphic to a direct sum of
$$H\underline{\Z/p}_\star[y]\otimes\Lambda[\omega]$$
and (suspended) copies of $HM_\star$ generated on monomials of the form
$$y^nx^{i-1}\pi$$
where $n\in \N_0$, $i=1,\dots, p-1$, and $\pi$ stands for one of the symbols $x,\nu$. The multiplicative
structure is entirely determined by the canonical inclusion of the good tail into
$$H\underline{\Z/p}^\star\C P^\infty_{\Z/p}\otimes \Lambda[z,u].$$ 
In particular, the good tail is the quotient of
$$H\underline{\Z/p}_\star[x,y]\otimes\Lambda[s,q,\nu,\omega]$$
modulo the relations
$$x^p-b^{p-1}x=y\sigma^{-2},$$
$$\sigma^{-2}q=b\nu-(\sigma^{-2}u)x,$$
$$\sigma^{-2}s=(\sigma^{-2}u)\nu,$$
$$(\sigma^{-2} u)s=0,$$
$$(\sigma^{-2}u)q=bs,$$
$$qs=0,$$
$$q\nu=xs,$$
$$\nu s=0,$$
$$\omega s=\omega\nu=0,$$
$$\omega q=-sy,$$
$$\omega x=y\nu.$$
Additionally, we have
$$H\underline{\Z/p}^\star B_{\Z/p}(\Z/p)^d=H^2_{(b,\sigma^{-2})}(H\underline{\Z/p}_\star^g,
H\underline{\Z/p}^\star B_{\Z/p}(\Z/p)^g)[-1].$$
\end{proposition}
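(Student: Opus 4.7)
The plan is to bootstrap from Proposition \ref{p10}, which (via the flag spectral sequence \rref{esss1a}) already gives the additive decomposition into the ``good tail'' $H\underline{\Z/p}_\star[y]\otimes\Lambda[\omega]$ and the $(p-1)$-fold $HM_\star$-summands generated by monomials $y^n x^{i-1}\pi$ with $\pi\in\{x,\nu\}$. What remains is (i) to upgrade this to a multiplicative presentation in the good tail, and (ii) to deduce the derived-tail formula via second local cohomology. The natural tool is comparison with Borel cohomology, since the good tail of any of our ring spectra injects into Borel cohomology up to $\sigma^{-2}$-multiples, which can in turn be absorbed into a judicious choice of the derived-tail generators (exactly as exploited in Proposition \ref{p33}).

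First I would work inside $H\underline{\Z/p}^c_\star B_{\Z/p}(\Z/p)$, which, by treating the periodicity $\sigma^{2}$ as a unit, is identified with $H^\ast(\C P^\infty\times B\Z/p;\Z/p)=\Z/p[x,b]\otimes\Lambda[u,z]$ together with the class $y$ realizing the Euler class $x^p-b^{p-1}x$ of the regular complex representation. The elements $q,s,\nu$ are then defined by the representatives \rref{ep102}, and $\omega$ by the Borel image $z(\sigma^{2-2p}t^{p-1}-b^{p-1})$ recorded above Proposition \ref{pzpzp}. Every one of the listed relations becomes a direct verification in this polynomial-exterior algebra: the Euler-class relation $x^p-b^{p-1}x=\sigma^{-2}y$ is Proposition \ref{p33}; the equations $qs=\nu s=(\sigma^{-2}u)s=0$ follow from $u^2=0$ and $z^2=0$; $\sigma^{-2}q=b\nu-(\sigma^{-2}u)x$ and $\sigma^{-2}s=(\sigma^{-2}u)\nu$ are repackagings of the defining formulas \rref{ep102}; $\omega x=y\nu$ and $\omega q=-sy$ come from substituting the Borel model of $\omega$ and comparing with $y$. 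The crux of step (i) is to check that no counterterm in the derived tail is generated: this holds because each relation we are verifying is a product of good-tail classes living in a bidegree where the derived tail is zero, or where the ambiguity is killed after multiplication by a suitable power of $\sigma^{-2}$ that was already built into the definition of the generator.

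Second, once the good tail is pinned down multiplicatively as the stated quotient, the derived-tail formula $H^2_{(b,\sigma^{-2})}(H\underline{\Z/p}^g_\star,H\underline{\Z/p}^\star B_{\Z/p}(\Z/p)^g)[-1]$ is forced by the general structural principle laid out after Propositions \ref{p1} and \ref{p2}: every $H\underline{\Z/p}$-module summand involved (namely $H\underline{\Z/p}$ itself and $HM$) has derived tail equal to the second local cohomology of its good tail at $(b,\sigma^{-2})$, desuspended by $1$. Since $H\underline{\Z/p}^\star B_{\Z/p}(\Z/p)$ is a direct sum of such summands, and the module structure over $H\underline{\Z/p}^g_\star$ is determined in the good tail, the derived tail of the whole module is obtained by applying $H^2_{(b,\sigma^{-2})}(H\underline{\Z/p}^g_\star,-)[-1]$ termwise. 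The main obstacle throughout is the relations that mix $HM$-generators (such as $q\nu=xs$ and $\omega q=-sy$): there the good-tail representatives are defined only up to derived-tail corrections, and I would handle this by observing that any such correction is $b^{-1}$-torsion whereas both sides of each relation carry enough $b$-divisibility in their good-tail components to make the identification unambiguous; this is what needs to be checked case-by-case.
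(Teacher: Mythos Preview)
Your proposal is correct and follows the same route as the paper: the paper records Proposition~\ref{pzpzp} with a bare \qed, treating it as a direct consequence of Proposition~\ref{p10} together with the Borel-cohomology representatives \rref{ep102} and the good-tail/derived-tail formalism set up after Propositions~\ref{p1}--\ref{p2}. Your elaboration---verifying the relations in $\Z/p[x,b]\otimes\Lambda[u,z]$, controlling derived-tail ambiguities by degree, and assembling the derived tail as second local cohomology summand-by-summand---is exactly the content implicit in that \qed.
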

\qed

\vspace{3mm}

\section{Images in Borel cohomology}\label{sborel}

Similarly as in the $p=2$ case, it is convenient to consider the {\em Borel cohomology dual
Steenrod algebra}
$$A^{cc}_\star=F(E\Z/p_+,H\underline{\Z/p}\wedge H\underline{\Z/p})_\star.$$
We have
$$A^{cc}_\star=(A_*[\sigma^{2},\sigma^{-2}][b]\otimes \Lambda[u])^\wedge_{(b)}.$$
In $A^{cc}_\star$, which makes a formal Hopf algebrioid, the non-equivariant coproduct relations hold, and we have
\beg{eru}{\rho^2:=\eta_R(\sigma^2)=\sigma^2+\sigma^{2p} b^{p-1} \xi_1+\dots+\sigma^{2p^n}
b^{p^n-1}\xi_n+\dots
}
\beg{erou}{\overline{u}:=\eta_R(u)=
u+\sigma^2 b\tau_0+\sigma^{2p}b^p\tau_1+\dots+\sigma^{2p^n}b^{p^n}\tau_n+\dots
}

\begin{proposition}\label{pppelements}
In $A_\star^{cc}$, we have
\beg{ero10}{
\underline{\xi}_n=\rho^{-2}\sigma^{2p^n-2p^{n-1}}\xi_n+\dots+\rho^{-2}\sigma^{2p^N-2p^{n-1}}b^{p^N
-p^n}\xi_N+\dots,
}
\beg{erou101}{\begin{array}{l}
\widehat{\xi}_n=\sigma^{2p^n-2p^{n-1}}(b\tau_n-\overline{u}\rho^{-2}\xi_n)+\dots\\
+\sigma^{2p^N-2p^{n-1}}(b\tau_N-\overline{u}\rho^{-2}\xi_N)b^{p^N-p^n}+\dots,
\end{array}
}
\beg{erou102}{
\underline{\tau}_n=
\rho^{-2}\sigma^{2p^n-2p^{n-1}}\tau_n+\dots+\rho^{-2}\sigma^{2p^N-2p^{n-1}}b^{p^N
-p^n}\tau_N+\dots \;,
}
\beg{erou103}{
\widehat{\tau}_n=\underline{\tau}_n\overline{u}=\rho^{-2}\sigma^{2p^n-2p^{n-1}}\tau_n\overline{u}+
\dots +\rho^{-2}\sigma^{2p^N-2p^{n-1}}\tau_N\overline{u}b^{p^N-p^n}+\dots\; .
}
\end{proposition}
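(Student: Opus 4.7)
The plan is to compute each of $\lambda(x)$, $\lambda(\nu)$, $\lambda(q)$, and $\lambda(s)$ directly in Borel cohomology and to match the results with the equivariant Milnor expansions \rref{emilnor2}, \rref{essss}, \rref{enununu} which define the unknowns $\underline{\xi}_n,\underline{\tau}_n,\widehat{\xi}_n,\widehat{\tau}_n$. In Borel cohomology of $\C P^\infty_{\Z/p}$ and $B_{\Z/p}(\Z/p)$ I identify $x = \sigma^{-2}c_1$ with $c_1$ the classical first Chern class of the tautological line bundle, and $\nu = \sigma^{-2}z$ with $z$ a degree-$1$ class whose Bockstein is $x$. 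The classical (non-equivariant) Milnor coactions then read
\[
\lambda(c_1) = \sum_{N \geq 0} c_1^{p^N}\xi_N,\qquad \lambda(z) = z + \sum_{n \geq 0}\sigma^{2p^n}x^{p^n}\tau_n,
\]
while $\lambda(\sigma^2) = \rho^2$ and $\lambda(u) = \overline{u}$ are provided by \rref{eru} and \rref{erou}, and $\lambda(b) = b$ since $b$ is universal.

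The central algebraic input is the iteration of the relation $x^p = \sigma^{-2}y + b^{p-1}x$ from Proposition \ref{p33}, which in characteristic $p$ yields
\[
x^{p^N} \;=\; \sum_{n=1}^{N}\sigma^{-2p^{n-1}}\,b^{\,p^N - p^n}\,y^{p^{n-1}} \;+\; b^{\,p^N - 1}\,x.
\]
Substituting this into $\lambda(x) = \rho^{-2}\sum_{N}\sigma^{2p^N}x^{p^N}\xi_N$, the coefficient of $x$ telescopes to $\rho^{-2}\rho^2 = 1$ via \rref{eru}, and the coefficient of $y^{p^{n-1}}$ is precisely formula \rref{ero10} for $\underline{\xi}_n$. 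The same substitution applied to $\lambda(\nu) = \rho^{-2}\lambda(z)$ produces \rref{erou102} for $\underline{\tau}_n$.

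For $\widehat{\xi}_n$, I expand $\lambda(q) = b\,\lambda(z) - \lambda(x)\,\overline{u}$ using the Borel expression $q = bz - xu$, together with the identity $bx^{p^N} - xb^{p^N} = \sum_{m=1}^{N}\sigma^{-2p^{m-1}}b^{\,p^N - p^m + 1}y^{p^{m-1}}$, which is immediate from the above expansion of $x^{p^N}$. Collecting $y^{p^{n-1}}$-coefficients and recognising the inner sum over $N \geq n$ of $\sigma^{2p^N - 2p^{n-1}}b^{\,p^N - p^n}\xi_N$ as $\rho^2\,\underline{\xi}_n$ reassembles the answer into the form \rref{erou101}. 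Finally, \rref{erou103}, namely $\widehat{\tau}_n = \underline{\tau}_n\overline{u}$, is immediate from the Borel identity $s = u\nu$ (which follows from the relation $\sigma^{-2}s = (\sigma^{-2}u)\nu$ in Proposition \ref{pzpzp}) combined with multiplicativity $\lambda(s) = \overline{u}\,\lambda(\nu)$. The main obstacle is bookkeeping the $\sigma$- and $b$-exponents through the iterated substitutions and checking that all sums converge in the $b$-adic topology defining $A^{cc}_\star$; no conceptual ingredient is required beyond \rref{eru}, \rref{erou}, and Propositions \ref{p33} and \ref{pzpzp}.
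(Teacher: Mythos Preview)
Your proposal is correct and follows essentially the same approach as the paper: both compute $\lambda$ on $x,\nu,q,s$ in Borel cohomology via the non-equivariant Milnor coaction together with \rref{eru}, \rref{erou}, and compare against \rref{emilnor2}, \rref{essss}, \rref{enununu} to read off the unknowns. The only organizational difference is that you iterate the relation $x^p=\sigma^{-2}y+b^{p-1}x$ once at the outset to obtain the closed form for $x^{p^N}$ and extract the $y^{p^{n-1}}$-coefficients directly, whereas the paper instead compares coefficients of $t^{p^n}$ to obtain two-term recursions for $\underline{\xi}_n,\widehat{\xi}_n,\underline{\tau}_n$ and then solves them by induction using \rref{ero1} and \rref{erou100}; one small point is that your claim that $\widehat{\tau}_n=\underline{\tau}_n\overline{u}$ is ``immediate'' does still require the low-degree check that the $\nu$-, $q$-, and $x$-terms of $\overline{u}\cdot\lambda(\nu)$ reassemble to $s\otimes 1+q\otimes(\tau_0+b^{p-2}\widehat{\xi}_1)$, which the paper carries out explicitly.
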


\begin{proof}
Multiplying \rref{eru} by $\sigma^{-2}\rho^{-2}$, we get
\beg{ero1}{\sigma^{-2}=\rho^{-2}+\rho^{-2}\sigma^{2p-2}b^{p-1}\xi_1+\dots \rho^{-2}\sigma^{2p^n-2}b^{p^n-1}
\xi_n+\dots}
Now $\C P^\infty_{\Z/p}$ has the same $\Z/p$-Borel cohomology as $\C P^\infty$, 
which is
\beg{eborelcp}{H\underline{\Z/p}^\star_c\C P^\infty=\Z/p[t][\sigma^{2},\sigma^{-2}][b]\otimes\Lambda[u].
}
Further, in Borel cohomology, we can write
$$x=\sigma^{-2}t$$
and thus,
$$y=\sigma^{2-2p}t^p-tb^{p-1}.$$
Combining this with \rref{emilnor2}, we have
\begin{align*}
\lambda(t) & = \lambda(\sigma^2 x) = x \otimes \rho^2 + \sum_{n \geq 1} y^{p^{n-1}}
            \otimes \underline{\xi}_n \rho^2\\
  & = \sigma^{-2}t \otimes \rho^2 +  \sum_{n \geq 1} (\sigma^{2p^{n-1}-2p^n}t^{p^n} - b^{p^{n}-p^{n-1}}t^{p^{n-1}})
    \otimes \underline{\xi}_n \rho^2\\
  & = t \otimes (\sigma^{-2}-b^{p-1}\underline{\xi}_1) \rho^2 +  \sum_{n
    \geq 1} t^{p^n} \otimes (\sigma^{2p^{n-1}-2p^n} \underline{\xi}_n -
    b^{p^{n+1}-p^{n}} \underline{\xi}_{n+1})\rho^2
\end{align*}
Comparing with the non-equivariant result
\begin{equation*}
\lambda(t) = t \otimes 1 + \sum_{n \geq 1} t^{p^n} \otimes \xi_n,
\end{equation*}
we have
\begin{equation*}
\begin{cases}
  1 & =(\sigma^{-2}-b^{p-1}\underline{\xi}_1) \rho^2 \\
  \xi_n & = (\sigma^{2p^{n-1}-2p^n} \underline{\xi}_n -
    b^{p^{n+1}-p^{n}} \underline{\xi}_{n+1})\rho^2,
\end{cases}
\end{equation*}
and so
\begin{equation*}
\begin{cases}
  \underline{\xi}_1  & =(\sigma^{-2}- \rho^{-2}) b^{1-p} \\
  \underline{\xi}_{n+1}  & = (\sigma^{2p^{n-1}-2p^n} \underline{\xi}_n -
    \rho^{-2}\xi_{n})b^{p^n-p^{n+1}}. \\
\end{cases}
\end{equation*}
(We work in Tate cohomology, into which the Borel cohomology embeds.)
Using \rref{ero1} and induction, we can prove \rref{ero10}.

Now apply $\lambda$ to $\sigma^{-2}y = x^p - b^{p-1}x$, we have
\begin{align*}
& y \otimes \rho^{-2} + \sum_{n\geq 1} y^{p^n} \otimes \underline{\theta}_n
  \rho^{-2} \\
  & =
(x^p \otimes 1 + \sum_{n \geq 1} y^{p^n} \otimes \underline{\xi}_n)-
(b^{p-1}x \otimes 1 + \sum_{n \geq 1} y^{p^{n-1}} \otimes
            b^{p-1}\underline{\xi}_n) \\
  & =
(\sigma^{-2}y \otimes 1 - y \otimes  b^{p-1}\underline{\xi}_1) + \sum_{n \geq 1}
    y^{p^n} \otimes (\underline{\xi}_n - b^{p-1}\underline{\xi}_{n+1})
\end{align*}
Comparing coefficients, we get 
\beg{ero11}{\underline{\theta}_n\rho^{-2}=\underline{\xi}_n^p-\underline{\xi}_{n+1}b^{p-1}.
}
Note, in fact, 
that the relation \rref{ero11} is true on the nose (meaning not just on the image in Borel cohomology, i.e. modulo the 
derived tail) by Proposition \ref{p33}, and the Hopf algebroid relation between the product 
and the coproduct. 
One should also point out that, in particular,
\beg{ero12}{\rho^{-2}=\sigma^{-2}-\underline{\xi}_1b^{p-1}.
}
Now multiplying \rref{ero1} by $\overline{u}$, we get
$$\overline{u}
\sigma^{-2}=\overline{u}\rho^{-2}+\overline{u}\rho^{-2}\sigma^{2p-2}b^{p-1}\xi_1+\dots 
\overline{u}\rho^{-2}\sigma^{2p^n-2}b^{p^n-1}
\xi_n+\dots$$
Plugging in \rref{erou}, we get
\beg{erou100}{\begin{array}{l}
u\sigma^{-2}-\overline{u}\rho^{-2}+b\tau_0=\\
b^{p-1}\sigma^{2p-2}(\overline{u}\rho^{-2}\xi_1-b\tau_1)+
\dots + b^{p^n-1}\sigma^{2p^n-2}(\overline{u}\rho^{-2}\xi_n-b\tau_n)+\dots
\end{array}
}
In the Borel cohomology, apply $\lambda$ to $q = bz - xu = bz - \sigma^{-2}tu$, we get 
\begin{equation*}
\lambda(q) = \lambda(z)(1 \otimes b) - \lambda(t) (1 \otimes \rho^{-2} \bar{u}).
\end{equation*}
Plugging in the formula for $\lambda(q)$, $\lambda(z)$, $\lambda(t)$, we get
\begin{align*}
&(bz - \sigma^{-2}tu) \otimes 1 + \sum_{n \geq 0} (\sigma^{2p^{n-1}-2p^n}
  t^{p^{n+1}} - b^{p^{n+1}-p^{n}} t^{p^n}) \otimes \widehat{\xi}_{n+1} \\
& = (z \otimes b + \sum_{n \geq 0} t^{p^n} \otimes \tau_n b) - (t \otimes
  \rho^{-2} \bar{u} + \sum_{n \geq 1} t^{p^n} \otimes \xi_n \rho^{-2} \bar{u}).
\end{align*}
The coefficients of $z$ match on both sides.
Comparing coefficients of $t^{p^n}$, we get
\begin{equation*}
  \begin{cases}
  -\sigma^{-2}u  -b^{p-1} \widehat{\xi}_1 + \rho^{-2} \overline{u} - b \tau_0 & = 0\\
 \sigma^{2p^{n}-2p^{n-1}} \widehat{\xi}_{n} -  b^{p^{n+1}-p^{n}} \widehat{\xi}_{n+1}
  - \tau_n b + \xi_n \rho^{-2}
  \overline{u} &= 0,             
\end{cases}
\end{equation*}
and so
\begin{equation*}
  \begin{cases}
  \widehat{\xi}_1 & =(-\sigma^{-2}u + \rho^{-2} \overline{u} - b \tau_0) b^{1-p} \\
  \widehat{\xi}_{n+1} & = (\sigma^{2p^{n-1}-2p^{n}} \widehat{\xi}_{n} - \tau_n b + \xi_n \rho^{-2}
  \bar{u}) b^{p^{n}-p^{n+1}}.             
\end{cases}
\end{equation*}
Using \rref{erou100} and induction, we can prove \rref{erou101}.

Recall that we have in Borel cohomology
\begin{align*}
  q & = bz - u\sigma^{-2}t \\
  s & = \sigma^{-2}zu \\
  \nu & = \sigma^{-2}z
\end{align*}

From above, we have
\begin{align}
\label{eq:xi1ul}
  b^{p-1}\underline{\xi}_1 & =  \sigma^{-2} - \rho^{-2}; \\
\label{eq:xi1hat}
  b^{p-1} \widehat{\xi}_1 & = -\sigma^{-2}u + \rho^{-2}\overline{u} - b\tau_0.
\end{align}
Multiplying \rref{eq:xi1ul} by $u$ (resp. $\overline{u}$) and adding to
\rref{eq:xi1hat}, we get
\begin{align}
  \label{eq:useful1}
   ub^{p-1}\underline{\xi}_1 + b^{p-1} \widehat{\xi}_1 + b\tau_0 & = \rho^{-2}(\overline{u}-u),
  \\
  \label{eq:useful2}
  \overline{u}b^{p-1}\underline{\xi}_1 + b^{p-1} \widehat{\xi}_1 + b\tau_0 & = \sigma^{-2}(\overline{u} - u).
\end{align}

Plugging in the Borel cohomology expression into formula \rref{enununu}
and comparing coefficients with 
\begin{equation*}
  \lambda(\nu) = \lambda(z)(1 \otimes \rho^{-2}) = z \otimes \rho^{-2} +
  \sum_{n \geq 0} t^{p^n}\otimes \tau_n\rho^{-2},
\end{equation*}
we must have that 
\begin{align}
\label{eq:nv1}
  \rho^{-2} & = \sigma^{-2} - b^{p-1} \underline{\xi}_1
  & \text{ coefficient of $z$ }\\
\label{eq:nv2}
  \rho^{-2} \tau_0& =  \sigma^{-2}(ub^{p-2} \underline{\xi}_1 + b^{p-2}
                    \widehat{\xi}_1 + \tau_0) - b^{p-1} \underline{\tau}_1
  &\text{ coefficient of $t$ }\\
  \label{eq:nv3}
  \rho^{-2} \tau_{n+1} & = \sigma^{2p^{n-1}-2p^{n}} \underline{\tau}_n -
                         b^{p^{n+1}-p^n} \underline{\tau}_{n+1}&
   \text{ coefficient of $t^n$}
\end{align}
Now, \rref{eq:nv1} is just \rref{eq:xi1ul}. Plugging \rref{eq:useful1} into
\rref{eq:nv2}, we have
\begin{align*}
  b^{p-1}\underline{\tau}_{1}  = \rho^{-2}(\sigma^{-2} b^{-1}(\overline{u} - u) - \tau_0).
\end{align*}
Plugging in \rref{erou} and inducting based on \rref{eq:nv3}, we get \rref{erou102}.

Now we shall prove that $\widehat{\tau}_n =  \underline{\tau}_n\overline{u}$. From
\begin{equation*}
\lambda(s) = s \otimes 1 + q \otimes (\tau_0 + b^{p-2} \widehat{\xi}_1) +
\sum_{n \geq 1} y^{p^{n-1}} \otimes \widehat{\tau}_n
\end{equation*}
and $\lambda(s) = \lambda(\nu u) = \lambda(\nu)(1 \otimes \overline{u})$, it
remains to verify
\begin{equation*}
 s \otimes 1 + q \otimes (\tau_0 + b^{p-2} \widehat{\xi}_1) =  \nu \otimes
 \overline{u} - q \otimes b^{p-2} \underline{\xi}_1 \overline{u} + x \otimes
(\tau_0 + b^{p-2} \widehat{\xi}_1) \overline{u}.
\end{equation*}
Using \rref{eq:xi1hat} and \rref{eq:useful2} ($\overline{u}$ is exterior), this is
equivalent to
\begin{equation*}
\nu \otimes (u-\overline{u}) + q \otimes
\sigma^{-2}b^{-1}(\overline{u}-u) + x \otimes b^{-1}\sigma^{-2}u\overline{u} = 0.
\end{equation*}
Plugging in the Borel cohomology expressions, the left hand side is the sum of  
\begin{equation*}
z \otimes [\sigma^{-2}(u-\overline{u})+
\sigma^{-2}(\overline{u}-u)] = 0
\end{equation*}
and ($u$ is exterior)
\begin{equation*}
- \sigma^{-2}t \otimes
u \sigma^{-2}b^{-1}(\overline{u}-u) + \sigma^{-2}t \otimes b^{-1}\sigma^{-2}u\overline{u} = 0.
\end{equation*}
Thus, we have \rref{erou103}.
\end{proof}

\vspace{3mm}
From this, we can deduce further multiplicative relations
\beg{erou104}{
\widehat{\xi}_n\rho^{-2}=-\underline{\xi}_n\overline{u}\rho^{-2}
+b\underline{\tau}_n,
}
\beg{erou105}{
\widehat{\tau}_n\rho^{-2}=\underline{\tau}_n\overline{u}\rho^{-2},
}
\beg{erou106}{
b\widehat{\tau}_n=\widehat{\xi}_n\overline{u}\rho^{-2},
}
and
\beg{erou107}{
\widehat{\tau}_n\rho^{-2}\overline{u}=0.
}
Recall, of course, \rref{ero12}, and also
\beg{erou12aa}{
\overline{u}\rho^{-2}=u\sigma^{-2}+b\tau_0+b^{p-1}\widehat{\xi}_1.
}
From this, we deduce additional multiplicative relations
\beg{eroo1}{\widehat{\tau}_n^2=\widehat{\tau}_n\underline{\tau}_n=\widehat{\tau}_n\widehat{\xi}_n=
\widehat{\xi}_n^2=
0
}
\beg{eroo3}{
\widehat{\xi}_n\underline{\tau}_n=\underline{\xi}_n\widehat{\tau}_n.
}
Again, these relations are true on the nose, and not just in Borel cohomology, by Proposition \ref{pzpzp}
and the compatibility of the product and the coproduct.

\vspace{3mm}
Now using the fact that $H\underline{\Z/p}\wedge H\underline{\Z/p}$ is a wedge of $R$-suspensions of
$H\underline{\Z/p}$ and $HT$, monomials in $A_\star$ which are $b$-divisible in $A_\star^{cc}$
must also be $b$-divisible in $A_\star$. Thus, we also get elements of the form
\beg{edivdiv}{\frac{\widehat{\xi}_m\widehat{\xi}_n}{b},\; 
\frac{\widehat{\xi}_m\underline{\xi}_n-\widehat{\xi}_n\underline{\xi}_m}{b},\;
\frac{\widehat{\xi}_m\widehat{\tau}_n}{b}= \frac{\widehat{\tau}_m\widehat{\xi}_n}{b},
\frac{\widehat{\xi}_m\underline{\tau}_n-\underline{\xi}_m\widehat{\tau}_n}{b},
}
(note that the last two are related by switching $m$ and $n$), and elements obtained by
iterating this procedure. 

In fact, this is related to the Example in Section \ref{prelim}. In the language of \cite{sw},
the ``quadruplet" $\widehat{\xi}_n,\underline{\xi}_n,\widehat{\tau}_n, \underline{\tau}_n$
generate an $HT_\star$ summand of $A_\star$. Thus, the elements \rref{edivdiv} are
precisely those divisions by $b$ which are allowed in $HT\wedge_{H\underline{\Z/p}}HT_\star$
according to the Example. We obtain the following 

\begin{proposition}\label{ppdiv}
Put $d\widehat{\xi}_i=\underline{\xi}_i$, $d\widehat{\tau}_i=\underline{\tau}_i$. For given
$i_1<\dots<i_k$, let $K(i_1,\dots,i_k)$ denote the sub-$\Z/p$-module of $A_\star$ spanned
by monomials of the form
\beg{ekkkk}{\kappa_{i_1}\dots\kappa_{i_k}\underline{\xi}_{i_1}^{s_1}\dots
\underline{\xi}_{i_k}^{s_k}}
where $0\leq s_\ell\leq p-2$, $\kappa_\ell$ can stand for any of the elements 
$\widehat{\xi}_\ell$, $\underline{\xi}_\ell$, $\widehat{\tau}_\ell$,
$\underline{\tau}_\ell$. Let $K_j(i_1,\dots,i_k)$ denote the submodule of $K(i_1,\dots,i_k)$
of elements divisible by $b^j$. Then $K_j(i_1,\dots,i_k)$ is spanned by elements
$$y,dy$$
where $y$ is of the form \rref{ekkkk} so that
at least $j+1$ of the elements $\kappa_{i_s}$ are of the form $\widehat{\xi}_{i_s}$ or $\widehat{\tau}_{i_s}$,
only at most one of them being $\widehat{\tau}_{i_s}$. Such $y$ will be called {\em admissible} (otherwise, $y=0$).
\end{proposition}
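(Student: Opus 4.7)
The proof hinges on the additive decomposition of $A_\star$ into $R$-shifts of $H\underline{\Z/p}_\star$ and $HT_\star$ summands established at the end of Section \ref{prelim}, together with the iterated version of Proposition \ref{p44}:
$$HT^{\wedge k}_{H\underline{\Z/p}} \simeq \bigvee_{j=0}^{k-1} \binom{k-1}{j} \Sigma^{j(\beta-1)} HT.$$
The $j$-th shifted summand has its ``top'' $\ell$-level displaced upward by $j$, so an element sitting at the reference level in a $\Sigma^{j(\beta-1)} HT$-summand is $b^j$-divisible but not $b^{j+1}$-divisible in $A_\star$.

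The first step is to prove that every admissible $y$ and every $dy$ with $y$ admissible lies in $K_j(i_1,\ldots,i_k)$. For $y$ itself, I would induct on hat count: each hatted factor contributes one factor of $b$ when rewritten via the relations \rref{erou104}--\rref{erou107} and \rref{erou12aa}, and Proposition \ref{pppelements} confirms that the resulting expression, divided by $b^j$, lies in the image of $A_\star \to A_\star^{cc}$. For $dy$, the $b$-divisibility follows from the signed cancellations exhibited in \rref{edivdiv}: the two-hat base case gives $d(\widehat{\xi}_m\widehat{\xi}_n) = \underline{\xi}_m\widehat{\xi}_n - \widehat{\xi}_m\underline{\xi}_n$, which is $b$-divisible in $A_\star$ by \rref{edivdiv}, and the higher hat cases follow by inserting this identity under the Leibniz rule.

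The second step is a dimension count. For each fixed power vector $(s_1,\ldots,s_k)$ with $0 \leq s_\ell \leq p-2$, the quadruplet-monomials at the $k$ indices populate the tops of the summands $\Sigma^{j(\beta-1)} HT$ in the iterated smash product, with multiplicities $\binom{k-1}{j}$. Modulo the relations \rref{eroo1}, \rref{eroo3}, together with $\widehat{\tau}_m\widehat{\tau}_n = 0$ for $m \neq n$ (coming from $\overline{u}^2 = 0$ in $A_\star^{cc}$), admissible $y$'s with exactly $h$ hats correspond bijectively, together with their $dy$'s, to the top classes of the $\Sigma^{(h-1)(\beta-1)} HT$-summands in the iterated decomposition. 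Summing these over all $h \geq j+1$ matches the rank of $K_j(i_1,\ldots,i_k)$ at each degree.

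The main obstacle will be the bookkeeping in the second step: matching admissible monomials and their formal derivatives with the top classes of shifted summands requires a simultaneous induction on both $k$ and $j$. The ``at most one $\widehat{\tau}$'' admissibility clause encodes precisely the vanishing $\widehat{\tau}_m\widehat{\tau}_n = 0$, while the formal operator $d$ captures the anti-symmetrized lifts of Borel cohomology identities back into $A_\star$. The base cases $k=1$ (handled by the explicit $HT_\star$-summand structure) and $k=2$ (governed by \rref{edivdiv}) are direct; the inductive step uses multiplicativity of the additive decomposition under the functor $HT \wedge_{H\underline{\Z/p}} (-)$, whose effect on summands is recorded by Proposition \ref{p44}.
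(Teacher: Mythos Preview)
Your approach is essentially the same as the paper's: the paper treats this proposition as following directly from the preceding paragraph (it is stated with an immediate \qed), relying on the fact that the quadruplets $\widehat{\xi}_n,\underline{\xi}_n,\widehat{\tau}_n,\underline{\tau}_n$ generate $HT_\star$-summands, the iterated splitting of Proposition~\ref{p44}, and the principle that $b$-divisibility in $A_\star$ is detected in $A_\star^{cc}$. You have simply unpacked these steps in more detail than the paper does.
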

\qed

\vspace{3mm}
Using the our computation of $H\underline{\Z/p}^\star B_{\Z/p}(\Z/p)$
in the last section, we similarly conclude that
\beg{emun1}{
\underline{\mu}_n=\frac{\widehat{\xi}_n\underline{\xi}_n^{p-1}-\underline{\theta}_n\rho^{-2}\overline{u}}{
b}.
}
By \cite{sw}, this element generates an $H\underline{\Z/p}_\star$-summand. 

\vspace{3mm}

\section{The $\Z/p$-equivariant Steenrod algebra}\label{ssteen}

To express the additive structure of $A_\star$, we introduce a ``Cartan-Serre basis" of elements of the
form
\beg{eueu0}{
\tau_0^e\underline{\Theta}_S\underline{M}_Q,
}
and
\beg{eueu}{
\underline{\Xi}_R\underline{\Theta}_S\underline{\Tau}_E\tau_0^e
}
where 
\beg{eueu1}{\begin{array}{ll}
R=(r_1,r_2,\dots), & r_n\in \Z, 0\leq r_n<p,\\
Q=(q_1,q_2,\dots), & q_n\in \{0,1\},\\
S=(s_1,s_2,\dots), & s_n\in \Z, 0\leq s_n,\\
E=(e_1,e_1,\dots), & e_n\in \{0,1\},\\
e\in \{0,1\}&
\end{array}
}
As usual, only finitely many non-zero entries are allowed in each sequence.
Here we understand
$$\underline{\Theta}_S=\underline{\theta}_1^{s_1}\underline{\theta}_2^{s_2}\dots,$$
$$\underline{M}_Q=\underline{\mu}_1^{q_1}\underline{\mu}_2^{q_2}\dots,$$
$$\underline{\Xi}_R=\underline{\xi}_1^{r_1}\underline{\xi}_2^{r_2}\dots,$$
$$\underline{\Tau}_E=\underline{\tau}_1^{e_1}\underline{\tau}_2^{e_2}\dots\;.$$
The elements $\underline{\Theta}_n$
have the dimensions introduced above. Additionally, we recall
$$|\underline{\Xi}_n|=2p^{n-1}+(p^n-p^{n-1}-1)\beta,$$
$$|\underline{\Tau}_n|=|\underline{\Xi}_n|+1.$$

Additionally, for any sequence of natural numbers
$P$ with finitely many non-zero elements, we denote by $|P|$ the number of non-zero elements in $P$.
We will assume
$|R+E|\neq 0$ in \rref{eueu}.

\begin{theorem}\label{tfinal}
The dual $\Z/p$-equivariant Steenrod algebra $A_\star$ is, additively, a sum of copies of
$$H\underline{\Z/p}_\star$$
shifted by the total dimension of elements of the form \rref{eueu0} with $R=E=0$, and copies
of 
$$HM_\star$$
with admissible generators of the form $y, dy$ as in Proposition \ref{ppdiv} times $b^{-\ell}$ where
$\ell$ is the maximum number so that $y$ is divisible by $b^\ell$ according to Proposition \ref{ppdiv}.
The dimensions of admissible monomials in the admissible monomials in $\underline{\xi}_m$,
$\underline{\tau}_n$ are given by the values $|\underline{\Xi}_R|$, $|\underline{\Tau}_E|$ given above.

The good tail $A_\star^g$ is multiplicatively generated, as an algebra over 
$$\Z/p[\sigma^{-2},b]\otimes
\Lambda[\sigma^{-2}u]$$ 
by the elements $\underline{\xi}_n$, $\widehat{\xi}_n$, $\underline{\tau}_n$, $\widehat{\tau}_n$, 
$\underline{\theta}_n, \underline{\mu}_n$,
$n\geq 1$, $\tau_0$, subject to all relations valid in $A^{cc}_\star$, including
\rref{erou104}-\rref{eroo3}. The derived tail is given by
$$A_\star^d=H^2_{(b,\sigma^{-2})}(H\underline{\Z/p}_\star^g, A_\star^g)[-1]$$
and $A_\star$ is an abelian ring with respect to $A_\star^g$.
\end{theorem}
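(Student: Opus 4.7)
The strategy is to combine the additive wedge decomposition of $H\underline{\Z/p}\wedge H\underline{\Z/p}$ from Sankar--Wilson \cite{sw} with the multiplicative information extracted in Sections \ref{slens} and \ref{sborel}, using the injection of the good tail $A_\star^g$ into the Borel cohomology dual Steenrod algebra $A_\star^{cc}$ as the primary detection tool.

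First I would establish the additive decomposition. By \cite{sw}, $H\underline{\Z/p}\wedge H\underline{\Z/p}$ splits as an $R$-graded wedge of suspensions of $H\underline{\Z/p}$ and $HT$, so it suffices to exhibit explicit summand generators in $A_\star$ whose total dimensions match this wedge. For the $H\underline{\Z/p}_\star$-summands, I would use the Cartan--Serre monomials $\tau_0^e\underline{\Theta}_S\underline{M}_Q$ of the form \rref{eueu0}, whose linear independence is verified via their images in $A_\star^{cc}$ given by Proposition \ref{pppelements} together with formula \rref{emun1}. For the $HT_\star$-summands, I would use multiplets of the form \rref{emultiplet101} decorated by Cartan--Serre data: for each choice of $\underline{\Theta}_S\tau_0^e$ and each base multiplet $(\underline{\tau}_n,\widehat{\tau}_n,\underline{\xi}_n,\widehat{\xi}_n)$, the indices $i=1,\ldots,p-1$ produce $p-1$ copies of $HT_\star$ at the positions predicted by iterating Proposition \ref{p44}, while the ``missing" case $i=p$ is absorbed via \rref{ero11} and replaced by the $H\underline{\Z/p}_\star$-summand generated by $\underline{\mu}_n$ according to \rref{emun1}.

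Next I would handle the $b$-divisibility within each $HT_\star$-summand. Proposition \ref{ppdiv} tells us exactly which $b$-divided combinations of multiplet elements are nonzero; the generators $y$ and $dy$ described there, shifted by $b^{-\ell}$ for the maximal $\ell$ of divisibility, correspond bijectively to the two $HM_\star$-generators of each $HT_\star$-summand arising from the cofibration \rref{eswsw1}. A bookkeeping argument matching dimensions of admissible monomials with the Sankar--Wilson enumeration completes the additive count.

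For the multiplicative statement, $A_\star^g$ injects into $A_\star^{cc}$ since the good tail is $b$-torsion-free (Proposition \ref{p1}) and Borel cohomology is injective there. Hence every multiplicative relation among the listed generators is already visible in $A_\star^{cc}$: the Milnor relations hold verbatim, together with the identities \rref{erou104}--\rref{eroo3} and the $b$-divisibility elements \rref{edivdiv}, while the additive count above ensures no further relations. The derived tail formula $A_\star^d=H^2_{(b,\sigma^{-2})}(H\underline{\Z/p}_\star^g,A_\star^g)[-1]$ and the abelian ring structure over $A_\star^g$ follow summand by summand from the general framework of Section \ref{prelim}, since each $H\underline{\Z/p}_\star$- and $HM_\star$-summand independently exhibits this second local cohomology pattern (Propositions \ref{p1}, \ref{p2}) and the property is preserved under direct sum. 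The main obstacle is the delicate bookkeeping matching Sankar--Wilson's count of $HT_\star$-summands with the Cartan--Serre multiplets: one must verify that $\underline{\mu}_n$ fills precisely the slot vacated by $\underline{\xi}_n^p$ via \rref{ero11}, and that Proposition \ref{ppdiv} captures exactly the $b$-divisions permitted in the iterated $HT\wedge_{H\underline{\Z/p}}HT$-type summands predicted by Proposition \ref{p44}. A secondary subtlety is confirming that the injection $A_\star^g\hookrightarrow A_\star^{cc}$ is tight enough that no hidden multiplicative relations escape detection, which should follow from injectivity on the good tail combined with the explicit formulas of Proposition \ref{pppelements}.
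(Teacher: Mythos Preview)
Your proposal is correct in outline and matches the overall logic developed in Sections~\ref{slens} and~\ref{sborel}, but the paper's actual proof of Theorem~\ref{tfinal} is considerably terser and takes a slightly different route for the final verification. Rather than carrying out a careful bookkeeping match against the Sankar--Wilson enumeration, the paper argues via the isotropy separation: it declares the Borel-cohomology side already handled by the computations of Section~\ref{sborel}, and then directly checks the geometric fixed points. Concretely, on $(?)^\phi$ it identifies the $\underline{\xi}_R$ with nonnegative powers of $\rho^{-2}$, $\tau_0$ with $\overline{u}\rho^{-2}$, and the elements $\widehat{\xi}_n,\widehat{\tau}_n,\underline{\tau}_n$ (together with their $\Z/p[\sigma^{-2}]\otimes\Lambda[u\sigma^{-2}]$-multiples) with the corresponding multiples of the non-equivariant $\tau_{n-1}$, noting that the relations prevent double counting. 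Since Borel cohomology plus geometric fixed points detect everything, this suffices.

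Your approach has the advantage of making the summand count explicit and tying it transparently to \cite{sw}, at the cost of the delicate bookkeeping you flag. The paper's approach sidesteps that bookkeeping entirely by reducing to a direct element-by-element check on $(?)^\phi$, which is quick but leaves more to the reader. Either route works; you might consider adding the geometric fixed points identification as a cross-check, since it gives an independent confirmation that no hidden relations survive and that the proposed basis is complete without appealing to the external enumeration.
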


\begin{proof}
We already checked this on Borel (co)homology. On geometric fixed points (which are determined
by the ``ideal" part), the elements $\underline{\xi}_R$ match non-negative powers of $\rho^{-2}$,
$\tau_0$ matches $\overline{u}\rho^{-2}$. The elements $\widehat{\xi}_n, \widehat{\tau}_n,\underline{\tau}_n$
and their $\Z/p[\sigma^{-2}]\otimes\Lambda[u\sigma^{-2}]$-multiples
represent the $\Z/p[\sigma^{-2}]\otimes\Lambda[u\sigma^{-2}]$-multiples of $\tau_{n-1}$. (The relations
guarantee that no element is represented twice.)
\end{proof}

\vspace{10mm}

\end{document}